\newtheorem{thm}{Theorem}
\newtheorem{prop}{Proposition}
\newtheorem{cor}{Corollary}
\newtheorem{rem}{Remark}
\newtheorem{lem}{Lemma}
\newcommand{\g}{\gamma} 
\newcommand{\G}{\Gamma}
\newcommand{\Cbb}{\mathbb{C}}
\newcommand{\lp}{\left(}
\newcommand{\rp}{\right)}
\newcommand{\lc}{\left\{}
\newcommand{\rc}{\right\}}
\newcommand{\lb}{\left[}
\newcommand{\rb}{\right]}
\newcommand{\RA}{\Rightarrow}
\begin{document}

\begin{flushright} OU-HET 892 \end{flushright}

\title{Summation formulae for the bilateral basic hypergeometric series ${}_1\psi_1 ( a; b; q, z )$}

\author{Hironori Mori} 
\address{Department of Physics, Graduate School of Science, Osaka University, Toyonaka, Osaka 560-0043, Japan}
\email{hiromori@het.phys.sci.osaka-u.ac.jp}

\author{Takeshi Morita} 
\address{Graduate School of Information Science and Technology, Osaka University, Toyonaka, Osaka 560-0043, Japan}
\email{t-morita@cr.math.sci.osaka-u.ac.jp}



\begin{abstract}
We give summation formulae for the bilateral basic hypergeometric series ${}_1\psi_1( a; b; q, z )$ through Ramanujan's summation formula, which are generalizations of nontrivial identities found in the physics of three-dimensional Abelian mirror symmetry on $\mathbf{R}P^2 \times S^1$. We also show the $q \to 1 - 0$ limit of our summation formulae.
\end{abstract}

\maketitle

\section{Introduction} \label{Intro}
In this paper, we give the following summation formulae for the bilateral series:
\begin{align} 
&
\frac{1}{2}
\left\{
\frac{\left( \frac{\gamma}{\alpha}; q \right)_\infty}{( \alpha\beta; q )_\infty}
\sum_{n \in \mathbb{Z}}
\frac{( \alpha \beta; q )_n}{( \frac{\gamma}{\alpha}; q )_n}
\left( \frac{\gamma w}{\alpha q^{1/2}} \right)^n
+
\frac{( \beta^2 \gamma; q )_\infty}{( \frac{1}{\beta}; q )_\infty}
\sum_{n \in \mathbb{Z}}
\frac{( \frac{1}{\beta}; q )_n}{( \beta^2 \gamma; q )_n}
\left( \frac{\gamma w}{\alpha q^{1/2}} \right)^n
\right\} \notag \\ 
&=
\frac
{\left( q^2, \frac{\g}{\alpha^2 \beta}, \frac{\alpha q^{3/2}}{\g w}, \alpha \beta w q^{1/2}; q^2 \right)_\infty}
{\left( q, \alpha^2 \beta^2, \frac{\g w}{\alpha q^{1/2}}, \frac{q^{1/2}}{\alpha \beta w}; q^2 \right)_\infty}, 
\label{main1}
\end{align}
and
\begin{align} 
&
\frac{1}{2}
\left\{
\frac{\left( \frac{\gamma}{\alpha}; q \right)_\infty}{( \alpha\beta; q )_\infty}
\sum_{n \in \mathbb{Z}}
\frac{( \alpha \beta; q )_n}{( \frac{\gamma}{\alpha}; q )_n}
\left( \frac{\gamma w}{\alpha q^{1/2}} \right)^n
-
\frac{( \beta^2 \gamma; q )_\infty}{( \frac{1}{\beta}; q )_\infty}
\sum_{n \in \mathbb{Z}}
\frac{( \frac{1}{\beta}; q )_n}{( \beta^2 \gamma; q )_n}
\left( \frac{\gamma w}{\alpha q^{1/2}} \right)^n
\right\} \notag \\ 
&=
\alpha \beta
\frac
{\left( q^2, \frac{\g}{\alpha^2 \beta}, \frac{\alpha q^{5/2}}{\g w}, \alpha \beta w q^{3/2}; q^2 \right)_\infty}
{\left( q, \alpha^2 \beta^2, \frac{\g w q^{1/2}}{\alpha}, \frac{q^{3/2}}{\alpha \beta w}; q^2 \right)_\infty}, 
\label{main2}
\end{align}
where $w \in \mathbb{C}^*$, and the parameters are constrained by $\alpha \beta^2 = - 1$ and $\beta \gamma = q$. Here, $( a; q )_n$ is the $q$-shifted factorial defined by
\begin{align*}
( a; q )_n := \lc
	\begin{aligned}
	& 1, && n = 0, \\ 
	& ( 1 - a ) ( 1 - a q ) \dots ( 1 - a q^{n - 1} ), && n \ge 1, \\ 
	& \lb ( 1 - a q^{-1} ) ( 1 - a q^{-2} ) \dots ( 1 - a q^n ) \rb^{-1}, && n \le - 1. 
	\end{aligned}
\right.
\end{align*}
Moreover, $( a; q )_\infty := \lim_{n \to \infty} ( a; q )_n$ and we use the shorthand notation
\begin{align*}
( a_1, a_2, \dots, a_m; q )_\infty := ( a_1; q )_\infty ( a_2; q )_\infty \dots ( a_m; q )_\infty.
\end{align*}
Surprisingly, the special case of the formulae \eqref{main1} and \eqref{main2} are originally shown in the context of physics \cite{MMT}. These are realized as the equality of a physical quantity called the superconformal index, which is calculated to get strong evidence for so-called Abelian mirror symmetry in three-dimensional supersymmetric quantum field theories.

In Section \ref{Main}, we provide the detailed proofs for the formulae \eqref{main1} and \eqref{main2}, which are the generalized versions of the formulae found in \cite{MMT}, based on the theta function of Jacobi and Ramanujan's summation formula
\begin{align} \label{rama}
\sum_{n \in \mathbb{Z}}
\frac{( a; q )_n}{( b; q )_n}
z^n
=
\frac{( q, b/a, a z, q/a z; q )_\infty}{( b, q/a, z, b/a z; q )_\infty}, \quad |b/a| < |z| < 1,
\end{align}
where the left-hand side is the bilateral basic hypergeometric series ${}_1\psi_1 ( a; b; q, z )$ (see Section \ref{Notation} for the explicit definition). This was given by S.~Ramanujan \cite{Ramanujan}.
Further contributions to summation formulae and transformations for the bilateral basic hypergeometric series with the base $q$ were given by Bailey \cite{Bailey, B1}, Slater \cite{Slater}, Jackson \cite{J1}, Jackson \cite{J2},   K.~R.~Vasuki and K.~R.~Rajanna \cite{VR}, and D.~D.~Somashekara, K.~N.~Murthy and S.~L.~Shalini \cite{SNS}. The specific point we should mention is that our summation formulae \eqref{main1} and \eqref{main2} connect the geometric series with two different bases $q$ and $q^2$. As an application, we also show that our formulae with taking the specific parameter combination reproduce the results obtained by the physical study of Abelian mirror symmetry on $\mathbf{R}P^2 \times S^1$ \cite{MMT} where $\mathbf{R}P^2$ is a real projective plane. This means that our generic formulae can be powerful tools for checking exactly the nontrivial statement in physics. 

In Section \ref{Limit}, we show the $q \to 1 - 0$ limit of our new formulae \eqref{main1} and \eqref{main2} from the viewpoint of connection problems on $q$-difference equations \cite{Birkhoff, Z0, M0}. The limit results in the relation among the gamma function, power functions and the bilateral hypergeometric series ${}_1H_1 ( a; b; z )$ with a suitable condition.
Other summation formulae for some bilateral hypergeometric series were studied by J.~Dougall \cite{Dougall} and W.~N.~Bailey \cite{Bailey}. For instance, Dougall derived the bilateral hypergeometric identity
\begin{align*}
{}_2H_2 ( a, b; c, d; 1 )
=
\frac{\Gamma ( 1 - a )\Gamma ( 1 - b ) \Gamma ( c ) \Gamma ( d ) \Gamma ( c + d - a - b - 1 )}{\Gamma ( c - a ) \Gamma ( c - b ) \Gamma ( d - a ) \Gamma ( d - b )},
\end{align*}
where $\Re ( c + d - a - b ) > 1$. Dougall also proved that a \textit{well-poised} series ${}_5H_5$ \cite{GR} could be evaluated at $w = 1$, and then we can obtain another summation formula for a well-poised series ${}_5H_5$ with $w = 1$ \cite{Slater}. In the $q \to 1 - 0$ limit of the formulae \eqref{main1} and \eqref{main2}, we reach to the formula for ${}_1H_1$ under a suitable condition shown in Theorem \ref{mainlim2}.

\section{Notation} \label{Notation}
In this section, we review basic notation.
The bilateral basic hypergeometric series with the base $q$ is given by
\begin{align} \label{bbh} 
{}_r\psi_s ( a_1, \dots, a_r; b_1, \dots, b_s; q, z ) 
:=
\sum_{n \in \mathbb{Z}}
\frac{( a_1, \dots, a_r; q )_n}{( b_1, \dots, b_s; q )_n}
\left\{ ( - 1 )^n q^{\frac{n ( n - 1 )}{2}} \right\}^{s - r} z^n.
\end{align}
The series \eqref{bbh} diverges for $z \not = 0$ if $s < r$ and converges for $|b_1 \dots b_s/a_1 \dots a_r| < |z| < 1$ if $r = s$ (see \cite{GR} for more details). We remark that the $q$-shifted factorial $( a; q )_n$ is the $q$-analogue of the shifted factorial
\begin{align*} 
( \alpha )_n
=
\alpha \{ \alpha + 1 \} \cdots \{ \alpha + ( n - 1 ) \},
\end{align*}
and the series \eqref{bbh} is the $q$-analogue of the bilateral hypergeometric function
\begin{align*}
{}_rH_s ( \alpha_1, \dots, \alpha_r; \beta_1, \dots, \beta_s; z )
:=
\sum_{n \in \mathbb{Z}}
\frac{( \alpha_1, \dots, \alpha_r )_n}{( \beta_1, \dots, \beta_s )_n}
z^n.
\end{align*}
By D'Alembert's ratio test, it can be checked that ${}_rH_r$ converges only for $|z| = 1$ \cite{Slater}, provided that $\Re ( \beta_1 + \dots + \beta_r - \alpha_1 - \dots - \alpha_r ) > 1$.

The $q$-gamma function $\Gamma_q ( z )$ is defined by
\begin{align} 
\Gamma_q ( z )
:=
\frac{( q; q )_\infty}{( q^z; q )_\infty}
( 1 - q )^{1 - z}, \qquad 0 < q < 1.
\end{align}
The $q \to 1 - 0$ limit of $\Gamma_q ( z )$ gives the gamma function \cite{GR}
\begin{align} 
\lim_{q \to 1 - 0}
\Gamma_q ( z ) = \Gamma ( z ).
\label{limgamma}
\end{align}
The theta function of Jacobi with the base $q$ is given by
\begin{align} \label{thetaJ} 
\theta_q ( z )
:=
\sum_{n \in \mathbb{Z}}
q^{\frac{n^2}{2}}
( - z )^n, \qquad \forall z \in \mathbb{C}^*.
\end{align}
Jacobi's triple product identity is 
\begin{align} 
\theta_q ( z )
=
( q, q^{1/2} z, q^{1/2}/z; q )_\infty.
\label{jtpi}
\end{align}
The theta function has the inversion formula
\begin{align} \label{inv} 
\theta_q ( z )
=
\theta_q \left( 1/z \right),
\end{align}
and satisfies the $q$-difference equation
\begin{align} 
\theta_q ( z q^k )
=
( - z )^{- k} q^{- \frac{k^2}{2}}
\theta_q ( z ).
\label{thetaperi}
\end{align}

In our study, the following proposition about the theta function \cite{Z1} is useful to consider the $q \to 1 - 0$ limit of our formulae in Section \ref{Limit}.
\begin{prop} 
For any $z \in \mathbb{C}^* ( - \pi < \arg z < \pi )$, we have
\begin{align}
\lim_{q \to 1 - 0}
\frac{\theta_q ( q^\beta z )}{\theta_q ( q^\alpha z )}
=
( - z )^{\alpha - \beta}.
\label{limt1}
\end{align}
\end{prop}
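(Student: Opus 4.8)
The plan is to reduce everything to the asymptotics of a single $q$-shifted factorial via Jacobi's triple product identity \eqref{jtpi}. Writing $\theta_q(w) = (q;q)_\infty\,(q^{1/2}w;q)_\infty\,(q^{1/2}/w;q)_\infty$ and applying this to $w = q^\beta z$ and $w = q^\alpha z$, the $z$-independent factor $(q;q)_\infty$ cancels in the quotient, leaving
\[
\frac{\theta_q(q^\beta z)}{\theta_q(q^\alpha z)}
=
\frac{(q^{1/2+\beta}z;q)_\infty}{(q^{1/2+\alpha}z;q)_\infty}\,
\frac{(q^{1/2-\beta}/z;q)_\infty}{(q^{1/2-\alpha}/z;q)_\infty}.
\]
So it suffices to understand how $(q^{c}z;q)_\infty$ behaves as $q \to 1-0$ for fixed $c$ and $z$, and in particular how that behavior depends on the exponent $c$.

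Next I set $\epsilon := -\log q \to 0+$ and take logarithms, so that $\log(q^c z;q)_\infty = \sum_{k\ge 0}\log(1 - e^{-\epsilon(c+k)}z)$. I treat this sum by Euler--Maclaurin (equivalently, by comparison with a Riemann integral), the leading term being $\tfrac{1}{\epsilon}\int_0^\infty \log(1-e^{-t}z)\,dt$. Expanding the lower endpoint then gives
\[
\log(q^c z;q)_\infty
=
\frac{1}{\epsilon}\int_0^\infty \log(1-e^{-t}z)\,dt
- c\log(1-z)
+ \tfrac12\log(1-z)
+ o(1),
\]
the crucial structural point being that both the divergent $1/\epsilon$ term and the constant $\tfrac12\log(1-z)$ term are independent of $c$, while the only $c$-dependent contribution at order $O(1)$ is the linear term $-c\log(1-z)$.

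Substituting this expansion into the four factors of the quotient, all the $c$-independent pieces cancel in pairs, and only the linear-in-$c$ terms survive. The two factors built on $z$ contribute $(\alpha-\beta)\log(1-z)$, and the two factors built on $1/z$ contribute $(\beta-\alpha)\log(1-1/z)$, so that
\[
\lim_{q\to1-0}\log\frac{\theta_q(q^\beta z)}{\theta_q(q^\alpha z)}
=
(\alpha-\beta)\bigl[\log(1-z)-\log(1-1/z)\bigr]
=
(\alpha-\beta)\log\frac{1-z}{1-1/z}.
\]
The punchline is the elementary identity $\frac{1-z}{1-1/z} = -z$; exponentiating yields the claimed value $(-z)^{\alpha-\beta}$. (As a consistency check, for integer $\beta-\alpha$ the $q$-difference equation \eqref{thetaperi} evaluates the ratio exactly, and passing to the limit reproduces the same answer.)

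I expect the main obstacle to be analytic rather than algebraic: making the Euler--Maclaurin comparison rigorous with a remainder that is genuinely $o(1)$, and fixing compatible branches of the logarithms throughout. This is precisely where the hypothesis $-\pi<\arg z<\pi$ enters: it removes the negative real axis, so that a single-valued determination of $\log z$, and hence of $\log(-z)$ and of the power $(-z)^{\alpha-\beta}$, can be chosen consistently with the determinations of $\log(1-z)$ and $\log(1-1/z)$ produced in the limit. The special real values of $z$ --- where $\int_0^\infty\log(1-e^{-t}z)\,dt$ must be read as an analytic continuation, and where $z=1$ must be excluded --- are then recovered by continuity in $z$.
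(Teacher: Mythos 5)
The paper gives no proof of this proposition: it is imported from Zhang \cite{Z1}, so there is no in-paper argument to measure yours against. Judged on its own, your route is sound and is essentially the standard derivation of such theta asymptotics. The reduction via Jacobi's triple product \eqref{jtpi} to a quotient of four $q$-shifted factorials is correct, and your Euler--Maclaurin expansion of $\log(q^cz;q)_\infty$ is the right one: the divergent term $\tfrac1\epsilon\int_0^\infty\log(1-e^{-t}z)\,dt=-\tfrac1\epsilon\mathrm{Li}_2(z)$ and the boundary term $\tfrac12\log(1-z)$ are independent of $c$ and cancel in the quotient, only the linear term $-c\log(1-z)$ survives, and the final bookkeeping $(\alpha-\beta)\bigl[\log(1-z)-\log(1-1/z)\bigr]$ together with $(1-z)/(1-1/z)=-z$ yields the claim. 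Your expansion is also consistent with the paper's own limit \eqref{limbin} (which it reproduces as the special case of a single pair of factors), and the integer-shift consistency check against \eqref{thetaperi} is correct.

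The residual work is where you locate it, but one point deserves to be made sharper than ``analytic continuation plus continuity.'' The hypothesis $-\pi<\arg z<\pi$ admits $z$ on the positive real axis, and there the difficulty is not only one of branches: for real $z>0$, $z\ne1$, individual factors such as $(q^{1/2+\alpha}z;q)_\infty$ or $(q^{1/2-\alpha}z^{-1};q)_\infty$ vanish along sequences of $q\to1-0$ (whenever some $q^{c+k}$ times the argument equals $1$), and indeed $\theta_q(q^\alpha z)$ itself then has zeros accumulating as $q\to1$, so the limit can only hold off an exceptional set of $q$ and the target $(-z)^{\alpha-\beta}$ sits on the branch cut of the principal power. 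This is an imprecision of the proposition as quoted rather than a flaw of your argument, but ``recovered by continuity in $z$'' does not by itself repair it; you would need uniformity of the Euler--Maclaurin remainder on compacta avoiding the singular rays. Finally, note that for $|z|\ne1$ part of your analysis can be short-circuited with the paper's own tools --- \eqref{limbin} disposes of the pair of factors whose argument has modulus less than one, and the inversion formula \eqref{inv} exchanges $z$ with $1/z$ --- but one pair always has modulus exceeding one, so some version of your asymptotic expansion is needed in any case, and your self-contained treatment is the cleaner choice.
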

We also use the following limiting formula \cite{GR}:
\begin{align}
\lim_{q \to 1 - 0}
\frac{( z q^\alpha; q )_\infty}{( z; q )_\infty}
=
( 1 - z )^{- \alpha}, \qquad |z| < 1.
\label{limbin}
\end{align}

\section{Main theorem} \label{Main}
In this section, we show the formulae \eqref{main1} and \eqref{main2} as the summation formulae of the bilateral basic hypergeometric series ${}_1\psi_1 ( a; b; q, z )$ \cite{GR} by utilizing Ramanujan's summation formula.
\begin{thm} \label{mainth1} 
For any $w \in \mathbb{C}^*$, we have 
\begin{align}
&
\frac{1}{2}
\frac{( q; q^2 )_\infty}{( q^2; q^2 )_\infty}
\left\{
\frac{\left( \frac{\gamma}{\alpha}; q \right)_\infty}{( \alpha \beta; q )_\infty}
{}_1\psi_1 \left( \alpha \beta; \frac{\gamma}{\alpha}; q, \frac{\gamma w}{\alpha q^{1/2}} \right)
+
\frac{( \beta^2 \gamma; q )_\infty}{( \frac{1}{\beta}; q )_\infty}
{}_1\psi_1 \left( \frac{1}{\beta}; \beta^2 \gamma; q, \frac{\gamma w}{\alpha q^{1/2}} \right)
\right\} \notag \\ 
&=
\frac
{\left( \frac{\gamma}{\alpha^2 \beta}; q^2 \right)_\infty}
{( \alpha^2 \beta^2; q^2 )}
\frac
{\left( \frac{\alpha q^{1/2}}{\gamma w} q, \frac{\alpha \beta w}{q^{1/2}} q; q^2 \right)_\infty}
{\left( \frac{\gamma w}{\alpha q^{1/2}}, \frac{q^{1/2}}{\alpha \beta w}; q^2 \right)_\infty}, 
\label{m1}
\end{align}
where $\alpha \beta^2 = - 1$ and $\beta \gamma = q$.
\end{thm}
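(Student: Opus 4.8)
The plan is to reduce both bilateral series to infinite products via Ramanujan's formula \eqref{rama} and then to pass from base $q$ to base $q^2$ using the Jacobi triple product \eqref{jtpi} together with a theta addition formula. First I would apply \eqref{rama} to each ${}_1\psi_1$ on the left, with $(a,b,z)=(\alpha\beta,\gamma/\alpha,\gamma w/\alpha q^{1/2})$ and $(a,b,z)=(1/\beta,\beta^2\gamma,\gamma w/\alpha q^{1/2})$ respectively; this is legitimate on the annulus $|b/a|<|z|<1$, and the final identity extends to all $w\in\mathbb{C}^*$ by analytic continuation, the product on the right furnishing the continuation. Imposing $\alpha\beta^2=-1$ and $\beta\gamma=q$ then collapses many arguments: in each application one finds $q/a=b$, so that the prefactors $(\gamma/\alpha;q)_\infty/(\alpha\beta;q)_\infty$ and $(\beta^2\gamma;q)_\infty/(1/\beta;q)_\infty$ partially cancel against Ramanujan's output. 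After simplification the two resulting products $T_1,T_2$ share the numerator $(q;q)_\infty(-q/\alpha;q)_\infty$ and the denominator $(-\beta q^{1/2}w;q)_\infty(-\beta q^{1/2}/w;q)_\infty$, differing only in the sign pattern of the $w$-dependent numerator factors $(\pm q^{1/2}w;q)_\infty(\pm q^{1/2}/w;q)_\infty$ and in the $\beta$-factors $(\mp 1/\beta;q)_\infty(\pm\beta q;q)_\infty$.

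Next I would add $T_1+T_2$ over this common denominator and rewrite each surviving product of the form $(q^{1/2}x;q)_\infty(q^{1/2}/x;q)_\infty$ as $\theta_q(x)/(q;q)_\infty$ via \eqref{jtpi}, using the inversion \eqref{inv} to tidy the $\beta$-factors. Writing $u=\beta q^{1/2}$, the bracketed sum becomes
\[
\frac{\theta_q(w)}{\theta_q(-u)}+\frac{\theta_q(-w)}{\theta_q(u)}=\frac{\theta_q(w)\theta_q(u)+\theta_q(-w)\theta_q(-u)}{\theta_q(u)\theta_q(-u)}.
\]
This is the heart of the matter and the main obstacle: the individual terms live in base $q$, but their combination must produce base $q^2$. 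I would handle the numerator with the theta addition formula $\theta_q(x)\theta_q(y)+\theta_q(-x)\theta_q(-y)=2\,\theta_{q^2}(-xy)\,\theta_{q^2}(-x/y)$, proved by splitting the double sum $\sum_{m,n}q^{(m^2+n^2)/2}(-x)^m(-y)^n$ onto $m+n$ even and setting $m=i+j,\ n=i-j$; and the denominator with $\theta_q(z)\theta_q(-z)=\frac{(q;q)_\infty^2}{(q^2;q^2)_\infty}\theta_{q^2}(z^2)$, which follows from \eqref{jtpi} and the base-doubling identity $(x;q)_\infty(-x;q)_\infty=(x^2;q^2)_\infty$. The bracket thereby becomes $\frac{2(q^2;q^2)_\infty}{(q;q)_\infty^2}\cdot\frac{\theta_{q^2}(-\beta q^{1/2}w)\,\theta_{q^2}(-\beta q^{1/2}/w)}{\theta_{q^2}(\beta^2 q)}$.

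Finally I would reassemble $\frac{1}{2}\frac{(q;q^2)_\infty}{(q^2;q^2)_\infty}(T_1+T_2)$: the factor $2$ cancels the $\frac{1}{2}$, the $(q;q)_\infty$ and $(q^2;q^2)_\infty$ bookkeeping collapses through $\frac{(q;q^2)_\infty}{(q;q)_\infty}=\frac{1}{(q^2;q^2)_\infty}$, and re-expanding the three $\theta_{q^2}$ by \eqref{jtpi} in base $q^2$ while splitting the remaining base-$q$ products through $(x;q)_\infty=(x;q^2)_\infty(xq;q^2)_\infty$ cancels all the spurious $q^{3/2}$-type factors. What survives is exactly the right-hand side once I substitute $\beta^2=-1/\alpha$ and $1/\beta^2=-\alpha$ (equivalently $\beta^2 q^2=-q^2/\alpha$ and $1/\beta^2=-\alpha$), which matches the leftover factors $(-q^2/\alpha;q^2)_\infty$ and $(-\alpha;q^2)_\infty$ against $(\beta^2 q^2;q^2)_\infty$ and $(1/\beta^2;q^2)_\infty$. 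I expect the accounting of which base-$q^2$ factors cancel to be the only delicate part beyond the theta addition step itself, and I anticipate that the difference formula \eqref{main2} will follow identically, with the even part $m+n$ replaced by the odd part, accounting for the $\alpha\beta$ prefactor and the shifted arguments there.
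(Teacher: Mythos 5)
Your proposal is correct and follows the same overall strategy as the paper's proof: apply Ramanujan's sum to both ${}_1\psi_1$'s, put the two resulting products over their common denominator, pass to theta functions via the triple product identity, and then change base from $q$ to $q^2$. The genuine difference lies in how that base change is organized. The paper derives it from Proposition \ref{p1}, $\theta_{q^2}(\xi/\eta)\,\theta_{q^2}(\xi\eta)=\theta_q(\xi)\,\theta_q(-\eta)$, applied to four cross terms and then symmetrized through Lemma \ref{lem1} and Corollaries \ref{co3}, \ref{co4}, \ref{co5}, with the final cancellation packaged as the ``$=1$'' identity of Corollary \ref{co6}. You instead invoke the quadratic addition formula
\[
\theta_q(x)\theta_q(y)+\theta_q(-x)\theta_q(-y)=2\,\theta_{q^2}(-xy)\,\theta_{q^2}(-x/y),
\]
proved by splitting the double sum over the parity of $m+n$, together with the duplication $\theta_q(z)\theta_q(-z)=\tfrac{(q;q)_\infty^2}{(q^2;q^2)_\infty}\theta_{q^2}(z^2)$ for the denominator $\theta_q(u)\theta_q(-u)$, $u=\beta q^{1/2}$. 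The two routes carry the same mathematical content --- Corollary \ref{co5} is exactly your addition formula at $x=\alpha\beta/q^{1/2}$, $y=-w$ --- but your formulation is preferable, because Proposition \ref{p1} is in fact false as a standalone identity: the substitution $N=n+m$, $M=n-m$ parametrizes only the pairs with $N\equiv M\pmod 2$, so the double sum does not factor into two independent sums, and the two sides of \eqref{ptp1} already disagree at order $q^{1/2}$ for generic $\xi,\eta$; only the symmetrized combination that the paper actually uses later (i.e.\ your addition formula) is valid. I checked the bookkeeping in your final assembly --- the cancellation of the $q^{3/2}$-shifted factors and the identifications $\beta^2q^2=-q^2/\alpha$ and $1/\beta^2=-\alpha=\alpha^2\beta^2$ --- and it does reproduce the right-hand side of \eqref{m1}. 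The only caveat, which you share with the paper, is that ``for any $w\in\mathbb{C}^*$'' must be read on the annulus where both bilateral series converge; outside it the left-hand side is undefined, and ``analytic continuation'' properly refers to the closed-form right-hand side rather than to the series itself.
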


\begin{thm} \label{mainth2} 
For any $w \in \mathbb{C}^*$, we have 
\begin{align}
&
\frac{1}{2}
\frac{( q; q^2 )_\infty}{( q^2; q^2 )_\infty}
\left\{
\frac{\left( \frac{\gamma}{\alpha}; q \right)_\infty}{( \alpha \beta; q )_\infty}
{}_1\psi_1 \left( \alpha \beta; \frac{\gamma}{\alpha}; q, \frac{\gamma w}{\alpha q^{1/2}} \right)
-
\frac{( \beta^2 \gamma; q )_\infty}{( \frac{1}{\beta}; q )_\infty}
{}_1\psi_1 \left( \frac{1}{\beta}; \beta^2 \gamma; q, \frac{\gamma w}{\alpha q^{1/2}} \right)
\right\} \notag \\ 
&=
\alpha \beta
\frac
{\left( \frac{\gamma}{\alpha^2 \beta}; q^2 \right)_\infty}
{( \alpha^2 \beta^2; q^2 )}
\frac
{\left( \frac{\alpha q^{1/2}}{\gamma w} q^2, \frac{\alpha \beta w}{q^{1/2}} q^2; q^2 \right)_\infty}
{\left( \frac{\gamma w q^{1/2}}{\alpha}, \frac{q^{3/2}}{\alpha \beta w}; q^2 \right)_\infty}, 
\label{m2}
\end{align}
where $\alpha \beta^2 = - 1$ and $\beta \gamma = q$.
\end{thm}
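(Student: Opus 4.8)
The plan is to reduce both bilateral series to closed products by Ramanujan's formula \eqref{rama}, rewrite the resulting $q$-shifted factorials as Jacobi theta functions via \eqref{jtpi}, and then exploit a pair of ``even/odd'' theta identities that trade the base $q$ for $q^2$ so as to collapse the $\pm$-combination into a single product. First I would use the constraints $\alpha\beta^2=-1$ and $\beta\gamma=q$ to normalize the parameters: writing $z=\frac{\gamma w}{\alpha q^{1/2}}$ one checks $z=-\beta q^{1/2}w$, $\frac{\gamma}{\alpha}=-\beta q$, $\alpha\beta=-\frac1\beta$, $\beta^2\gamma=\beta q$, so that the two summands are the sign-flip partners
\[
g(\epsilon):=\frac{(\epsilon\beta q;q)_\infty}{(\epsilon/\beta;q)_\infty}\,{}_1\psi_1\!\left(\frac{\epsilon}{\beta};\epsilon\beta q;q,z\right),\qquad \epsilon=\pm1,
\]
with the first brace-term in \eqref{m2} equal to $g(-1)$ and the second to $g(+1)$. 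Applying \eqref{rama} to each (valid on the annulus $|q/\alpha|<|\beta q^{1/2}w|<1$, with the identity extended to all $w\in\mathbb{C}^*$ afterward by meromorphy), the well-poised structure $q/a=b=\epsilon\beta q$ forces a cancellation, leaving
\[
g(\epsilon)=K\,\frac{(-\epsilon q^{1/2}w,\,-\epsilon q^{1/2}/w;q)_\infty}{(\epsilon/\beta;q)_\infty(\epsilon\beta q;q)_\infty},\qquad K=\frac{(q,\,-q/\alpha;q)_\infty}{(-\beta q^{1/2}w,\,-\beta q^{1/2}/w;q)_\infty}.
\]

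Next, Jacobi's triple product \eqref{jtpi} turns each quotient into a ratio of theta functions, $g(\epsilon)=K\,\theta_q(-\epsilon w)/\theta_q(\epsilon\beta q^{1/2})$, so that the left-hand side of Theorem \ref{mainth2} becomes
\[
\tfrac12\tfrac{(q;q^2)_\infty}{(q^2;q^2)_\infty}\big(g(-1)-g(+1)\big)=\tfrac12\tfrac{(q;q^2)_\infty}{(q^2;q^2)_\infty}\,K\,\frac{\theta_q(w)\theta_q(\beta q^{1/2})-\theta_q(-w)\theta_q(-\beta q^{1/2})}{\theta_q(\beta q^{1/2})\theta_q(-\beta q^{1/2})}.
\]
The crucial step, which I expect to be the main obstacle, is the pair of product identities
\[
\theta_q(x)\theta_q(y)+\theta_q(-x)\theta_q(-y)=2\,\theta_{q^2}(-xy)\,\theta_{q^2}(-x/y),
\]
\[
\theta_q(x)\theta_q(y)-\theta_q(-x)\theta_q(-y)=-2q^{1/2}x\,\theta_{q^2}(-qxy)\,\theta_{q^2}(-qx/y),
\]
obtained by writing the left-hand side as $\sum_{m,n}q^{(m^2+n^2)/2}(-x)^m(-y)^n\big(1\pm(-1)^{m+n}\big)$ and reindexing the surviving terms (those with $m+n$ even for $+$, odd for $-$) by $(m+n,\,m-n)$, which recombine into base-$q^2$ theta functions. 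It is exactly this dissection that produces the change of base $q\to q^2$ visible on the right-hand sides, and the extra factor $q^{1/2}x$ in the ``difference'' case is what shifts the half-integer $q$-powers and supplies the overall coefficient distinguishing \eqref{m2} from \eqref{m1}.

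Finally, applying the difference identity with $x=w$, $y=\beta q^{1/2}$, and using $\theta_q(\beta q^{1/2})\theta_q(-\beta q^{1/2})=(q;q)_\infty^2(\beta^2q^2;q^2)_\infty(\beta^{-2};q^2)_\infty$ from \eqref{jtpi}, I would re-expand everything into base-$q^2$ products through $(a;q)_\infty=(a;q^2)_\infty(aq;q^2)_\infty$ and $(x;q)_\infty(-x;q)_\infty=(x^2;q^2)_\infty$. The $w$-independent factors then collapse, using $\beta^2=-1/\alpha$, to $\frac{(-q/\alpha;q^2)_\infty}{(-\alpha;q^2)_\infty}=\frac{(\gamma/\alpha^2\beta;q^2)_\infty}{(\alpha^2\beta^2;q^2)_\infty}$, while the $w$-dependent factors telescope through repeated use of $(a;q^2)_\infty=(1-a)(aq^2;q^2)_\infty$; the residual rational factor simplifies to $-1/\beta$, which equals $\alpha\beta$ precisely because $\alpha\beta^2=-1$, reproducing the coefficient in \eqref{m2}. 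The sum identity applied to $g(-1)+g(+1)$ yields \eqref{m1} by the same route. The remaining work is routine: the delicate points are keeping all signs and half-integer powers of $q$ correct in the theta manipulations and verifying that every spurious factor cancels.
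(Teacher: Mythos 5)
Your proposal is correct, and while it follows the same broad strategy as the paper (Ramanujan's ${}_1\psi_1$ sum, then Jacobi's triple product, then a quadratic transformation of theta functions that trades base $q$ for $q^2$), your key lemma is packaged differently and, in one respect, more soundly. The paper isolates the base change in its Proposition 2, $\theta_{q^2}(\xi/\eta)\theta_{q^2}(\xi\eta)=\theta_q(\xi)\theta_q(-\eta)$, proved by the reindexing $(n,m)\mapsto(N,M)=(n+m,n-m)$; but that map hits only pairs $(N,M)$ of equal parity, so the double sum does not factor and the proposition is false as an identity of individual products (a quick numerical check with $q=0.1$, $\xi=2$, $\eta=3$ confirms this). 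The paper survives because it only ever invokes Proposition 2 inside the symmetrized combinations produced by its Lemma 1, where the parity defect cancels. Your dissection identities
\[
\theta_q(x)\theta_q(y)\pm\theta_q(-x)\theta_q(-y)=
\begin{cases}
2\,\theta_{q^2}(-xy)\,\theta_{q^2}(-x/y),\\
-2q^{1/2}x\,\theta_{q^2}(-qxy)\,\theta_{q^2}(-qx/y),
\end{cases}
\]
are exactly the correct symmetrized statements (I verified both numerically), and they absorb in one stroke what the paper spreads over Lemma 1, Corollaries 4--9 and the $q$-difference equation \eqref{thetaperi}; in particular the factor $-2q^{1/2}x$ in the odd case cleanly accounts for the coefficient $\alpha\beta$ and the shifted powers of $q$ that distinguish \eqref{m2} from \eqref{m1}. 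The rest of your reduction checks out: with $x=w$, $y=\beta q^{1/2}$ the left-hand side becomes $-q^{1/2}w\,\frac{(\beta^2q;q^2)_\infty}{(\beta^{-2};q^2)_\infty}\cdot\frac{(-\beta q^{5/2}w,\,-q^{-1/2}/(\beta w),\,-q^{3/2}w/\beta,\,-q^{1/2}\beta/w;q^2)_\infty}{(-\beta q^{1/2}w,\,-\beta q^{3/2}w,\,-\beta q^{1/2}/w,\,-\beta q^{3/2}/w;q^2)_\infty}$, and peeling off the single factors $(1+\beta q^{1/2}w)$ and $(1+q^{-1/2}/(\beta w))$ does reduce this to the right-hand side of \eqref{m2} with the residual constant equal to $1$. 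You are also more careful than the paper about the annulus of validity $|q/\alpha|<|\beta q^{1/2}w|<1$ for Ramanujan's formula and the subsequent meromorphic continuation.
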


We obtain the following corollaries immediately from Theorem \ref{mainth1} and \ref{mainth2}.
\begin{cor} 
For any $w \in \mathbb{C}^*$, we have 
\begin{align}
&
\frac{1}{2}
\left\{
\frac{\left( \frac{\gamma}{\alpha}; q \right)_\infty}{( \alpha \beta; q )_\infty}
{}_1\psi_1 \left( \alpha \beta; \frac{\gamma}{\alpha}; q, \frac{\gamma w}{\alpha q^{1/2}} \right)
+
\frac{( \beta^2 \gamma; q )_\infty}{( \frac{1}{\beta}; q )_\infty}
{}_1\psi_1 \left( \frac{1}{\beta}; \beta^2 \gamma; q, \frac{\gamma w}{\alpha q^{1/2}} \right)
\right\} \notag \\ 
&=
\frac{( q^2; q^2 )_\infty}{( q; q^2 )_\infty}
\frac
{\left( \frac{\gamma}{\alpha^2 \beta}; q^2 \right)_\infty}
{( \alpha^2 \beta^2; q^2 )}
\frac
{\left( \frac{\alpha q^{1/2}}{\gamma w} q, \frac{\alpha \beta w}{q^{1/2}} q; q^2 \right)_\infty}
{\left( \frac{\gamma w}{\alpha q^{1/2}}, \frac{q^{1/2}}{\alpha \beta w}; q^2 \right)_\infty}, 
\label{mcor1}
\end{align}
where $\alpha \beta^2 = - 1$ and $\beta \gamma = q$. 
\end{cor}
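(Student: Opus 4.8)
The plan is to read off \eqref{mcor1} directly from Theorem \ref{mainth1} by clearing a single base-changing prefactor, so no new identity is needed. Comparing the two displays, the left-hand side of \eqref{m1} is exactly $\tfrac{1}{2}$ times the same bracketed combination of the two ${}_1\psi_1$ series that appears on the left of \eqref{mcor1}, the only difference being that in \eqref{m1} this combination carries the additional constant factor $\frac{( q; q^2 )_\infty}{( q^2; q^2 )_\infty}$. First I would simply multiply both sides of \eqref{m1} by its reciprocal $\frac{( q^2; q^2 )_\infty}{( q; q^2 )_\infty}$.

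On the left-hand side this multiplication cancels the factor $\frac{( q; q^2 )_\infty}{( q^2; q^2 )_\infty}$, leaving precisely the expression displayed on the left of \eqref{mcor1}. On the right-hand side it introduces the prefactor $\frac{( q^2; q^2 )_\infty}{( q; q^2 )_\infty}$ in front of the product of theta-type quotients, which is exactly the right-hand side of \eqref{mcor1}; all remaining factors on the right of \eqref{m1} are untouched. Hence the two sides match term by term under the standing constraints $\alpha \beta^2 = -1$ and $\beta \gamma = q$, and the corollary follows.

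The only point requiring verification is that this rescaling is legitimate, i.e.\ that $\frac{( q; q^2 )_\infty}{( q^2; q^2 )_\infty}$ is a nonzero complex number independent of $w, \alpha, \beta, \gamma$. This is immediate: for $\abs{q} < 1$ both infinite products $( q; q^2 )_\infty$ and $( q^2; q^2 )_\infty$ converge to finite nonzero values, so their ratio is a well-defined nonzero constant and division by it is harmless. The convergence window $\abs{b/a} < \abs{z} < 1$ for the two ${}_1\psi_1$ series (which here reduces to the same condition $\abs{-q/\alpha} < \abs{\gamma w/(\alpha q^{1/2})} < 1$ for both, since the two series share the common ratio $b/a = -q/\alpha$ after imposing the constraints) plays no further role, as it is inherited unchanged from Theorem \ref{mainth1}. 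Consequently there is no substantive obstacle: the corollary is a one-line rescaling of \eqref{m1}, and its entire content lies in the already-established Theorem \ref{mainth1}.
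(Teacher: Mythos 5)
Your proposal is correct and matches the paper exactly: the paper states that this corollary follows immediately from Theorem \ref{mainth1}, and the intended step is precisely the multiplication of both sides of \eqref{m1} by the nonzero constant $( q^2; q^2 )_\infty/( q; q^2 )_\infty$ that you carry out. Your additional remark that this constant is finite and nonzero for $\abs{q}<1$ is a harmless (and correct) elaboration of a point the paper leaves implicit.
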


\begin{cor} 
For any $w \in \mathbb{C}^*$, we have 
\begin{align}
&
\frac{1}{2}
\left\{
\frac{\left( \frac{\gamma}{\alpha}; q \right)_\infty}{( \alpha \beta; q )_\infty}
{}_1\psi_1 \left( \alpha \beta; \frac{\gamma}{\alpha}; q, \frac{\gamma w}{\alpha q^{1/2}} \right)
-
\frac{( \beta^2 \gamma; q )_\infty}{( \frac{1}{\beta}; q )_\infty}
{}_1\psi_1 \left( \frac{1}{\beta}; \beta^2 \gamma; q, \frac{\gamma w}{\alpha q^{1/2}} \right)
\right\} \notag \\ 
&=
\alpha \beta
\frac{( q^2; q^2 )_\infty}{( q; q^2 )_\infty}
\frac
{\left( \frac{\gamma}{\alpha^2 \beta}; q^2 \right)_\infty}
{( \alpha^2 \beta^2; q^2 )}
\frac
{\left( \frac{\alpha q^{1/2}}{\gamma w} q^2, \frac{\alpha \beta w}{q^{1/2}} q^2; q^2 \right)_\infty}
{\left( \frac{\gamma w q^{1/2}}{\alpha}, \frac{q^{3/2}}{\alpha \beta w}; q^2 \right)_\infty}, 
\label{mcor2}
\end{align}
where $\alpha \beta^2 = - 1$ and $\beta \gamma = q$. 
\end{cor}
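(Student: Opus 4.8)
The plan is to derive \eqref{mcor2} directly from Theorem \ref{mainth2}, of which it is a purely cosmetic rearrangement. Comparing the left-hand sides of \eqref{m2} and \eqref{mcor2}, I observe that they coincide except that \eqref{m2} carries the extra scalar prefactor $\frac{( q; q^2 )_\infty}{( q^2; q^2 )_\infty}$ in front of the braces, whereas \eqref{mcor2} does not. For $0 < q < 1$ this prefactor is a nonzero, finite constant depending only on $q$, so it may be cleared by a single multiplication without affecting convergence or the constrained parameters $\alpha \beta^2 = -1$, $\beta \gamma = q$.

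The key step is therefore to multiply both sides of the identity \eqref{m2} by the reciprocal factor $\frac{( q^2; q^2 )_\infty}{( q; q^2 )_\infty}$. On the left-hand side this cancels the prefactor and leaves exactly the bracketed difference of the two ${}_1\psi_1$ expressions that appears in \eqref{mcor2}. On the right-hand side the same factor $\frac{( q^2; q^2 )_\infty}{( q; q^2 )_\infty}$ multiplies the theta-quotient already present in \eqref{m2}, and since none of the infinite products in that quotient involve the base $q^2$ in a way that interacts with $( q; q^2 )_\infty$ or $( q^2; q^2 )_\infty$, it simply appears as an additional front factor. The result is precisely the right-hand side of \eqref{mcor2}, including the overall $\alpha \beta$ and the unchanged argument list in base $q^2$.

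I expect no genuine obstacle here: all of the analytic content, namely the application of Ramanujan's summation formula \eqref{rama}, Jacobi's triple product identity \eqref{jtpi}, and the convergence range $|b/a| < |z| < 1$ for the bilateral series ${}_1\psi_1$, has already been absorbed into the proof of Theorem \ref{mainth2}. The only point worth checking is that the multiplicative factor $\frac{( q^2; q^2 )_\infty}{( q; q^2 )_\infty}$ is nonzero and finite, so that the passage from \eqref{m2} to \eqref{mcor2} is a reversible equivalence rather than a one-way implication; this holds for every $0 < q < 1$. Consequently the corollary follows immediately from Theorem \ref{mainth2}, with no further estimation required.
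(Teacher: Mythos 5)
Your proposal is correct and matches the paper exactly: the paper states that this corollary follows immediately from Theorem \ref{mainth2}, i.e.\ by multiplying both sides of \eqref{m2} by the nonzero constant $( q^2; q^2 )_\infty / ( q; q^2 )_\infty$, which is precisely your argument.
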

We consider the $q \to 1 - 0$ limit of our formulae \eqref{mcor1} and \eqref{mcor2} in Section \ref{Limit}.

\subsection{Ramanujan's summation formula}
We review Ramanujan's sum for ${}_1\psi_1 ( a; b; q, z )$ to establish our main theorems. 
\begin{thm}[Ramanujan's sum for ${}_1\psi_1( a; b; q, z )$] \label{Ram} 
For any $z \in \mathbb{C}$, we have
\begin{align} \label{rsum11}
 {}_1\psi_1 ( a; b; q, z )
 =
 \frac{( q, b/a, a z, q/a z; q )_\infty}{( b, q/a, z, b/a z; q )_\infty}
\end{align}
with $|b/a| < |z| < |1|$.
\end{thm}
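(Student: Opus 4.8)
The plan is to prove Ramanujan's ${}_1\psi_1$ evaluation by analytic continuation in the lower parameter $b$, anchoring the whole argument on the $q$-binomial theorem
\[
\sum_{n\ge 0}\frac{(a;q)_n}{(q;q)_n}z^n=\frac{(az;q)_\infty}{(z;q)_\infty},\qquad |z|<1,
\]
which I would record first; it follows at once from the first-order functional equation $f(z)=\frac{1-az}{1-z}\,f(qz)$ satisfied by its left-hand side together with the normalization $f(0)=1$. The guiding idea is that the right-hand side of \eqref{rsum11} is, as a function of $b$, a ratio of convergent $q$-shifted factorials and hence meromorphic on $\Cbb$, while the left-hand side is analytic in $b$ on a disc about the origin. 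If I can force the two to agree on a sequence of $b$-values accumulating at an interior point of that disc, the identity theorem closes the argument.

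Concretely, I would fix $a,z,q$ with $|z|<1$ and view both sides of \eqref{rsum11} as functions of $b$ on the disc $|b|<|a|\,|z|$, on which the bilateral series converges and is analytic and the product side has no poles. The key observation is that for $b=q^{k}$ with $k\ge 1$ the factor $1/(b;q)_n$ vanishes for every $n\le -k$, since the defining product for $(q^{k};q)_n$ then contains the factor $1-q^{k}q^{-k}=0$; consequently the bilateral sum collapses to the unilateral sum $\sum_{n\ge 1-k}$. Reindexing by $m=n+k-1$ turns the summand ratio into $\frac{1-aq^{1-k}q^{m}}{1-q^{m+1}}\,z$, which is precisely the term ratio of ${}_1\phi_0(aq^{1-k};-;q,z)$, so the $q$-binomial theorem evaluates it in closed form; a direct comparison then shows this value coincides with the right-hand side of \eqref{rsum11} specialized to $b=q^{k}$. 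Because the points $b=q^{k}$ accumulate at $b=0$ inside the disc, the identity theorem yields \eqref{rsum11} for all $b$ in the disc, and analytic continuation in $b$ extends it to the full region $|b/a|<|z|<1$.

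I expect the main obstacle to lie in the bookkeeping of the specialization step: one must carry the surviving leading term at $n=1-k$ together with the prefactors produced by the reindexing, and verify that after applying the $q$-binomial theorem everything reassembles into $\frac{(q,\,q^{k}/a,\,az,\,q/az;q)_\infty}{(q^{k},\,q/a,\,z,\,q^{k}/az;q)_\infty}$ with no stray factors. A secondary technical point is to justify that the bilateral series is analytic in $b$ uniformly on compact subsets of the disc, so that the identity theorem genuinely applies; this follows from the rapid geometric-type decay of the terms as $n\to\pm\infty$ in the convergence region. I note that one could instead argue that both sides satisfy the common first-order $q$-difference equation $f(qz)=\frac{q(1-z)}{b-aqz}f(z)$ in $z$, and deduce that their ratio is invariant under $z\mapsto qz$; however, converting this into a proof requires showing that the ratio is \emph{holomorphic}, that is, that the zeros and poles of the two sides cancel, before invoking the constancy of a holomorphic function on the torus $\Cbb^{*}/q^{\Zbb}$, and it is exactly this pole-matching subtlety that the analyticity-in-$b$ argument avoids.
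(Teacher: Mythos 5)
Your proposal is correct in outline, but note that the paper does not prove this statement at all: Theorem \ref{Ram} is quoted as a classical result, with the proof deferred to the literature (Hahn, M.~Jackson, Andrews, Askey, Ismail). What you have written is essentially a reconstruction of Ismail's argument \cite{Ismail}, one of the papers cited there: specialize $b=q^{k}$ so that $1/(b;q)_{n}$ kills all terms with $n\le -k$, evaluate the surviving unilateral sum by the $q$-binomial theorem, and then invoke the identity theorem in $b$, using that the points $q^{k}$ accumulate at the interior point $b=0$ of the disc $\abs{b}<\abs{a}\abs{z}$ on which the bilateral series converges locally uniformly. All the essential ingredients are present and the steps you flag as ``bookkeeping'' do close (the surviving leading term $\frac{(a;q)_{1-k}}{(q^{k};q)_{1-k}}z^{1-k}$ times $\frac{(aq^{1-k}z;q)_{\infty}}{(z;q)_{\infty}}$ does reassemble into the right-hand side at $b=q^{k}$). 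Two small points deserve care: the claim that the product side ``has no poles'' on the disc $\abs{b}<\abs{a}\abs{z}$ fails when $\abs{a}\abs{z}>1$ (the factor $(b;q)_{\infty}$ in the denominator vanishes at $b=q^{-j}$), so you should either restrict to $\abs{b}<\min(1,\abs{a}\abs{z})$ before applying the identity theorem or phrase the conclusion as an identity of meromorphic functions; and the final ``analytic continuation'' step is vacuous, since the disc $\abs{b}<\abs{a}\abs{z}$ already coincides with the stated region $\abs{b/a}<\abs{z}$. Your closing remark about the alternative $q$-difference-equation proof and its pole-matching subtlety is accurate; the functional equation $f(qz)=\frac{q(1-z)}{b-aqz}f(z)$ is the correct one.
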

There are a number of proofs of the summation formula \eqref{rsum11} in the literature. The first published proof of the summation formula \eqref{rsum11} was given by W.~Hahn \cite{Hahn} and M.~Jackson \cite{J2}. We can find other proofs by G.~E.~Andrews \cite{A1,A2}, Askey \cite{As1}, and M.~E.~H.~Ismail \cite{Ismail}. Ramanujan's summation formula \eqref{rsum11} is considered as the ``bilateral extension'' of the $q$-binomial theorem \cite{GR} 
\begin{align} \label{qbinomial} 
\sum_{n \ge 0}
\frac{( a; q )_n}{( q; q )_n}
z^n
=
\frac{( a z; q )_\infty}{( z; q )_\infty}
\end{align}
for $|z| < 1$. The $q$-binomial theorem \eqref{qbinomial} was derived by Cauchy \cite{C1}, Heine \cite{H1}, and many mathematicians. We remark that the identity \eqref{rsum11} is also thought of as the $q$-analogue of the ``bilateral binomial theorem'' \cite{Mbi} found by M.~E.~Horn \cite{Horn}
\begin{align*} 
{}_1H_1 ( a; c; z )
=
\frac{( 1 - z )^{c - a - 1}}{( - z )^{c - 1}}
\frac{\Gamma ( 1 - a ) \Gamma ( c )}{\Gamma ( c - a )},
\end{align*}
where $a$ and $c$ are complex numbers such that $\Re ( c - a ) > 1$, and $z$ is a complex number with $|z| = 1$ and $z \neq 1$.

For later use, let us write the application of Ramanujan's summation formula \eqref{rsum11}. We have the following relations by Theorem \ref{Ram}.
\begin{cor} \label{l1} 
For any $w \in \mathbb{C}^*$ (provided that $|\gamma w/( \alpha q^{1/2} )| < 1$), we have
\begin{align}
{}_1\psi_1 \left( \alpha \beta; \frac{\gamma}{\alpha}; q, \frac{\gamma w}{\alpha q^{1/2}} \right)
&=
\frac
{\left( q, \frac{\gamma}{\alpha^2 \beta}, \frac{\beta \gamma w}{q^{1/2}}, \frac{q^{3/2}}{\beta \gamma w}; q \right)_\infty}
{\left( \frac{\gamma}{\alpha}, \frac{q}{\alpha \beta}, \frac{\gamma w}{\alpha q^{1/2}}, \frac{q^{1/2}}{\alpha \beta w}; q \right)_\infty}, \\ 
{}_1\psi_1 \left( \frac{1}{\beta}; \beta^2 \gamma; q, \frac{\gamma w}{\alpha q^{1/2}} \right)
&=
\frac
{\left( q, \beta^3 \g, \frac{\gamma w}{\alpha \beta q^{1/2}}, \frac{\alpha \beta q^{3/2}}{\gamma w}; q \right)_\infty}
{\left( \beta^2 \gamma, q \beta, \frac{\gamma w}{\alpha q^{1/2}}, \frac{\alpha \beta^{3} q^{1/2}}{w}; q \right)_\infty}
=
\frac
{\left( q, \frac{\g}{\alpha^2 \beta}, \frac{\gamma w}{\alpha \beta q^{1/2}}, \frac{\alpha \beta q^{3/2}}{\gamma w}; q \right)_\infty}
{\left( \beta^2 \gamma, q \beta, \frac{\gamma w}{\alpha q^{1/2}}, \frac{q^{1/2}}{\alpha \beta w}; q \right)_\infty}. 
\end{align}
\end{cor}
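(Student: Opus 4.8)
The plan is to obtain both identities as direct specializations of Ramanujan's summation formula \eqref{rsum11} from Theorem \ref{Ram}, the only nonroutine ingredient being a single algebraic simplification enforced by the constraint $\alpha\beta^2=-1$.

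First I would treat the first relation. Setting $a=\alpha\beta$, $b=\gamma/\alpha$, and $z=\gamma w/(\alpha q^{1/2})$ in \eqref{rsum11}, I would compute the four numerator arguments and four denominator arguments appearing on the right-hand side: namely $b/a=\gamma/(\alpha^2\beta)$, $az=\beta\gamma w/q^{1/2}$, and $q/(az)=q^{3/2}/(\beta\gamma w)$, together with $b=\gamma/\alpha$, $q/a=q/(\alpha\beta)$, the unchanged $z=\gamma w/(\alpha q^{1/2})$, and $b/(az)=q^{1/2}/(\alpha\beta w)$. Substituting these into \eqref{rsum11} reproduces the claimed expression verbatim, with no use of the constraints. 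I would keep in force the convergence hypothesis $|b/a|<|z|<1$ underlying \eqref{rsum11}, of which the stated bound $|\gamma w/(\alpha q^{1/2})|<1$ is the upper half.

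Next I would treat the second relation. With $a=1/\beta$, $b=\beta^2\gamma$, and the same $z$, the identical bookkeeping gives $b/a=\beta^3\gamma$, $az=\gamma w/(\alpha\beta q^{1/2})$, $q/(az)=\alpha\beta q^{3/2}/(\gamma w)$, $b=\beta^2\gamma$, $q/a=q\beta$, and $b/(az)=\alpha\beta^3 q^{1/2}/w$, which yields the first of the two displayed expressions at once. To reach the second expression I would invoke $\alpha\beta^2=-1$; squaring gives $\alpha^2\beta^4=1$, hence $\beta^3=1/(\alpha^2\beta)$ and $\alpha\beta^3=1/(\alpha\beta)$. The former converts $\beta^3\gamma$ into $\gamma/(\alpha^2\beta)$ in the numerator, and the latter converts $\alpha\beta^3 q^{1/2}/w$ into $q^{1/2}/(\alpha\beta w)$ in the denominator, giving the second form.

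Since the whole argument is substitution followed by one algebraic rewrite, I expect no genuine obstacle. The only step demanding care is the second equality in the second relation: one must notice that $\alpha\beta^2=-1$ enters only through its square $\alpha^2\beta^4=1$, while the other standing constraint $\beta\gamma=q$ is not needed for this corollary. I would finally double-check the half-integer powers of $q$ and the placement of factors in each $q$-shifted factorial to be sure that nothing is dropped in passing between the two forms.
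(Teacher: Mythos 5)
Your proposal is correct and coincides with the paper's own (essentially unwritten) argument: the paper derives Corollary \ref{l1} exactly by substituting the two parameter choices into Ramanujan's formula \eqref{rsum11} and then invoking the standing constraints to pass to the second form. Your bookkeeping of the eight arguments checks out, and your observation that only $\alpha^2\beta^4=1$ (not $\beta\gamma=q$) is needed for the final rewrite is a small but accurate refinement of the paper's blanket remark that both constraints are in force.
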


\begin{rem} 
Recall that there exist relations $\alpha \beta^2 = - 1$ and $\beta \gamma = q$. 
\end{rem}

\subsection{Relations between the theta functions}
Before showing the precise proofs of our main theorems, in order to make them simpler, we prepare several relations for the theta functions with the same or the different bases. First of all, we obtain the following relations by using the inversion formula of the theta function \eqref{inv}:
\begin{lem} \label{lem1} 
For any $\xi, \eta \in \mathbb{C}^*$, we have
\begin{align*}
\theta_{q^2} ( \xi \eta )
&=
\frac{1}{2}
\left\{
\theta_{q^2} ( \xi \eta ) + \theta_{q^2} \left( \frac{1}{\xi \eta} \right)
\right\}, \\ 
\theta_{q^2} \left( \frac{\xi}{\eta} \right)
&=
\frac{1}{2}
\left\{
\theta_{q^2} \left( \frac{\xi}{\eta} \right) + \theta_{q^2} \left( \frac{\eta}{\xi} \right)
\right\}. 
\end{align*}
\end{lem}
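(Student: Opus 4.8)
The plan is to derive both identities directly from the inversion formula \eqref{inv}, which I first specialize to the base $q^2$: for every $z \in \mathbb{C}^*$ one has $\theta_{q^2}(z) = \theta_{q^2}(1/z)$. This specialization is legitimate because \eqref{inv} holds for an arbitrary admissible base, and $q^2$ is such a base; indeed it follows at once from the defining series \eqref{thetaJ} upon replacing the summation index $n$ by $-n$.

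For the first identity, I would set $z = \xi\eta$ in the specialized inversion formula, obtaining $\theta_{q^2}(1/(\xi\eta)) = \theta_{q^2}(\xi\eta)$. Substituting this into the right-hand side replaces the second summand by $\theta_{q^2}(\xi\eta)$, so the braced expression equals $2\,\theta_{q^2}(\xi\eta)$, and the prefactor $1/2$ returns $\theta_{q^2}(\xi\eta)$, as claimed.

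For the second identity I would apply the same specialized inversion formula with $z = \xi/\eta$, whose reciprocal is $1/z = \eta/\xi$; this gives $\theta_{q^2}(\eta/\xi) = \theta_{q^2}(\xi/\eta)$. The identical averaging argument then shows that the right-hand side collapses to $\theta_{q^2}(\xi/\eta)$, completing the proof.

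I do not expect any genuine obstacle here: both statements are merely tautological reformulations of the inversion symmetry $\theta_{q^2}(z) = \theta_{q^2}(1/z)$, rewritten in the symmetrized form $\theta_{q^2}(z) = \frac{1}{2}\{\theta_{q^2}(z) + \theta_{q^2}(1/z)\}$. The value of this trivial-looking lemma is presumably organizational, since the symmetrized expression will be the convenient shape for later manipulations that combine the two bilateral series with base $q^2$. The only point worth a line is that $\xi\eta$ and $\xi/\eta$ remain in $\mathbb{C}^*$ whenever $\xi, \eta \in \mathbb{C}^*$, so that \eqref{inv} applies, and this is immediate.
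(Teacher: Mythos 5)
Your proof is correct and is exactly the paper's argument: the paper derives Lemma \ref{lem1} by the same one-line appeal to the inversion formula \eqref{inv} applied with base $q^2$, so that the two summands inside each brace coincide and the average is tautological. Nothing is missing.
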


\begin{cor} \label{co3} 
If we put $\xi = \alpha \beta/q^{1/2}$ and $\eta = 1/w$ in Lemma \ref{lem1}, we obtain
\begin{align*}
\frac
{1}
{\theta_{q^2} \left( \frac{\alpha \beta}{q^{1/2} w} \right)
\theta_{q^2} \left( \frac{\alpha \beta w}{q^{1/2}} \right)}
=
\frac
{4}
{\left\{
\theta_{q^2} \left( \frac{\alpha \beta}{q^{1/2} w} \right) + \theta_{q^2} \left( \frac{q^{1/2} w}{\alpha \beta} \right)
\right\}
\left\{
\theta_{q^2} \left( \frac{\alpha \beta w}{q^{1/2}} \right) + \theta_{q^2} \left( \frac{q^{1/2}}{\alpha \beta w} \right)
\right\}}. 
\end{align*}
\end{cor}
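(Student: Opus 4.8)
The plan is purely computational: I would substitute the prescribed values $\xi = \alpha\beta/q^{1/2}$ and $\eta = 1/w$ directly into the two identities furnished by Lemma \ref{lem1}, then multiply the results and take reciprocals. Since $\alpha, \beta \in \mathbb{C}^*$ and $w \in \mathbb{C}^*$, all of the arguments that arise lie in $\mathbb{C}^*$, so every theta value is well defined and the inversion formula \eqref{inv} (which underlies Lemma \ref{lem1}) applies without restriction.

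First I would record the four monomials produced by the substitution: the product $\xi\eta = \alpha\beta/(q^{1/2}w)$ and its reciprocal $1/(\xi\eta) = q^{1/2}w/(\alpha\beta)$, together with the ratio $\xi/\eta = \alpha\beta w/q^{1/2}$ and its reciprocal $\eta/\xi = q^{1/2}/(\alpha\beta w)$. Feeding these into the two lines of Lemma \ref{lem1} expresses each of $\theta_{q^2}(\alpha\beta/(q^{1/2}w))$ and $\theta_{q^2}(\alpha\beta w/q^{1/2})$ as one half of the sum of a theta value and its inversion partner, which are precisely the two bracketed sums appearing in the statement of Corollary \ref{co3}.

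Next I would multiply the two resulting identities together. The product of the left-hand sides is exactly the denominator $\theta_{q^2}(\alpha\beta/(q^{1/2}w))\,\theta_{q^2}(\alpha\beta w/q^{1/2})$ standing on the left of Corollary \ref{co3}, while the two factors of $\tfrac12$ combine to give $\tfrac14$ multiplying the product of the two bracketed sums. Taking reciprocals of both sides then turns the $\tfrac14$ into the factor $4$ in the numerator and reproduces the claimed expression verbatim.

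As for the main obstacle, there is essentially none: the only points to verify are bookkeeping—that the substitution reproduces precisely the four arguments appearing in the stated denominator, and that the reciprocal of a product of two halves is $4$ divided by the product of the bracketed sums. No convergence or analytic input beyond $\alpha\beta w \neq 0$ is required, so Corollary \ref{co3} is an immediate consequence of Lemma \ref{lem1}.
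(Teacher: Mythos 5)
Your proof is correct and is exactly the route the paper intends: Corollary \ref{co3} is stated as an immediate consequence of substituting $\xi=\alpha\beta/q^{1/2}$, $\eta=1/w$ into the two identities of Lemma \ref{lem1}, multiplying them, and taking reciprocals, which is precisely what you do. The bookkeeping of the four arguments and the factor $4$ checks out, so nothing is missing.
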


\begin{cor} \label{corl3} 
If we put $\xi = \alpha \beta q/q^{1/2}$ and $\eta = 1/w$ in Lemma \ref{lem1}, we obtain
\begin{align*}
\frac
{1}
{\theta_{q^2} \left( \frac{\alpha \beta}{q^{1/2} w} q \right)
\theta_{q^2} \left( \frac{\alpha \beta w}{q^{1/2}} q \right)}
=
\frac
{4}
{\left\{
\theta_{q^2} \left( \frac{\alpha \beta}{q^{1/2} w} q \right)
+
\theta_{q^2} \left( \frac{q^{1/2} w}{\alpha \beta q} \right)
\right\}
\left\{
\theta_{q^2} \left( \frac{\alpha \beta w}{q^{1/2}} q \right)
+
\theta_{q^2} \left( \frac{q^{1/2}}{\alpha \beta q w} \right)
\right\}}.
\end{align*}
\end{cor}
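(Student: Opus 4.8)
The plan is to treat Corollary~\ref{corl3} as a direct specialization of Lemma~\ref{lem1}, so that essentially no new computation is required beyond matching the arguments of the theta functions. First I would recall that each identity in Lemma~\ref{lem1} is nothing more than the inversion formula \eqref{inv} read in the base $q^2$: since $\theta_{q^2}(z) = \theta_{q^2}(1/z)$, averaging $\theta_{q^2}(\xi\eta)$ with $\theta_{q^2}(1/(\xi\eta))$ returns $\theta_{q^2}(\xi\eta)$, and similarly for the pair $\xi/\eta$, $\eta/\xi$. Thus the whole content of the corollary is to plug in the prescribed values and then rearrange two scalar identities into a single product identity.

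Next I would substitute $\xi = \alpha\beta q/q^{1/2}$ and $\eta = 1/w$ and simplify the four relevant arguments, repeatedly using $q/q^{1/2} = q^{1/2}$. This produces $\xi\eta = \frac{\alpha\beta}{q^{1/2}w}q$ with reciprocal $\frac{q^{1/2}w}{\alpha\beta q}$, and $\xi/\eta = \frac{\alpha\beta w}{q^{1/2}}q$ with reciprocal $\frac{q^{1/2}}{\alpha\beta q w}$. Feeding these into the two lines of Lemma~\ref{lem1} gives
\[
\theta_{q^2}\left( \frac{\alpha\beta}{q^{1/2}w}q \right) = \frac{1}{2}\left\{ \theta_{q^2}\left( \frac{\alpha\beta}{q^{1/2}w}q \right) + \theta_{q^2}\left( \frac{q^{1/2}w}{\alpha\beta q} \right) \right\}
\]
together with the analogous identity for $\frac{\alpha\beta w}{q^{1/2}}q$ and its reciprocal $\frac{q^{1/2}}{\alpha\beta q w}$.

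Finally I would take the reciprocal of each of these two scalar equations, rewriting each as $1/\theta_{q^2}(\cdot) = 2/\{\,\cdots\,\}$, and then multiply the two resulting equations. The two factors of $2$ in the numerators combine into the overall $4$ of the claimed right-hand side, while the product of the two theta functions assembles into the denominator on the left, giving exactly the stated formula. The only point requiring attention — bookkeeping rather than a genuine obstacle — is verifying that the simplified arguments coincide verbatim with those displayed in the statement, and that the paired arguments really are mutual inverses so that \eqref{inv} applies precisely as it did in Corollary~\ref{co3}; this is the same mechanism as there, with the extra factor of $q$ carried through.
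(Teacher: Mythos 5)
Your proposal is correct and is exactly the paper's (implicit) argument: specialize Lemma \ref{lem1} at $\xi = \alpha\beta q/q^{1/2}$, $\eta = 1/w$, note that $\xi\eta = \frac{\alpha\beta}{q^{1/2}w}q$ and $\xi/\eta = \frac{\alpha\beta w}{q^{1/2}}q$ with the stated reciprocals, then invert and multiply the two resulting identities to produce the factor $4$. The argument-matching you flag is the only content, and you have carried it out correctly.
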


We can derive the product formula of the theta functions with the different bases.
\begin{prop} \label{p1} 
For any $\xi$, $\eta \in \mathbb{C}^*$, we have
\begin{align}
\theta_{q^2} \left( \frac{\xi}{\eta} \right) \theta_{q^2} ( \xi \eta )
=
\theta_q ( \xi ) \theta_q ( - \eta ).
\label{ptp1}
\end{align}
\end{prop}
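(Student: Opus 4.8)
The plan is to reduce both sides of \eqref{ptp1} to infinite products by means of Jacobi's triple product identity \eqref{jtpi}, and then to match the resulting shifted factorials. Applying \eqref{jtpi} with base $q^2$ to the left-hand side, I would write $\theta_{q^2}(\xi/\eta) = (q^2, q\xi/\eta, q\eta/\xi; q^2)_\infty$ and $\theta_{q^2}(\xi\eta) = (q^2, q\xi\eta, q/(\xi\eta); q^2)_\infty$, so that the left-hand side becomes $(q^2;q^2)_\infty^2$ multiplied by the four factors $(q\xi/\eta, q\eta/\xi, q\xi\eta, q/(\xi\eta); q^2)_\infty$. On the right-hand side I would instead apply \eqref{jtpi} with base $q$ to obtain $\theta_q(\xi) = (q, q^{1/2}\xi, q^{1/2}/\xi; q)_\infty$ and $\theta_q(-\eta) = (q, -q^{1/2}\eta, -q^{1/2}/\eta; q)_\infty$, so that the right-hand side is $(q;q)_\infty^2$ times four linear factors built from $\pm q^{1/2}$ times powers of $\xi$ and $\eta$.

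The key algebraic tool is the base-doubling factorization $(a;q)_\infty = (a;q^2)_\infty (aq;q^2)_\infty$, which I would derive by splitting the product $\prod_{k \ge 0}(1 - aq^k)$ into its even- and odd-indexed factors. Applied to $(q;q)_\infty^2$, this yields $(q;q^2)_\infty^2 (q^2;q^2)_\infty^2$, reproducing the $(q^2;q^2)_\infty^2$ already present on the left and isolating a surplus $(q;q^2)_\infty^2$ to be absorbed. For the remaining base-$q$ factors I would exploit the companion identities $(x;q)_\infty(-x;q)_\infty = (x^2;q^2)_\infty$ and $(q^{1/2}x;q)_\infty(-q^{1/2}x;q)_\infty = (qx^2;q^2)_\infty$, which are exactly the devices that trade the half-integer powers $q^{1/2}$ appearing on the right for integer powers of $q$ in base $q^2$.

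With everything expressed in base $q^2$, the concluding step is a direct comparison: after cancelling the common $(q^2;q^2)_\infty^2$, the proposition reduces to an elementary rearrangement of the argument set $\{q\xi/\eta,\, q\eta/\xi,\, q\xi\eta,\, q/(\xi\eta)\}$ against the factors produced on the right. As a cross-check I would also run the argument at the level of power series, expanding each $\theta_{q^2}$ via \eqref{thetaJ}, combining the two into a double sum $\sum_{m,n} q^{m^2+n^2}(-1)^{m+n}\xi^{m+n}\eta^{n-m}$, and passing to the variables $j = m+n$, $k = n-m$ to decouple the exponents.

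I expect the main obstacle to be the bookkeeping of the half-integer powers of $q$ together with the sign carried by the $-\eta$ argument: the right-hand factors $q^{1/2}\xi$ and $-q^{1/2}\eta$ must be paired and squared in precisely the right way for the $q^{1/2}$ contributions to collapse into the purely integer-power base-$q^2$ factors on the left, and it is in arranging this pairing and tracking the resulting signs that an error is most likely to occur.
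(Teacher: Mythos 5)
Your series-based cross-check is essentially the paper's own proof: expand both $\theta_{q^2}$ factors by \eqref{thetaJ}, form the double sum $\sum_{m,n}q^{m^2+n^2}(-1)^{m+n}\xi^{m+n}\eta^{n-m}$, and substitute $j=m+n$, $k=n-m$. But this substitution is precisely where the argument (yours and the paper's alike) breaks down: as $(m,n)$ runs over $\mathbb{Z}^2$, the pair $(j,k)$ runs only over the sublattice $j\equiv k\ (\mathrm{mod}\ 2)$, not over all of $\mathbb{Z}^2$, so the double sum does \emph{not} factor into $\bigl(\sum_j\bigr)\bigl(\sum_k\bigr)$. What the change of variables actually yields is the even part of the right-hand side, namely
\begin{equation*}
2\,\theta_{q^2}\!\left(\xi/\eta\right)\theta_{q^2}(\xi\eta)
=\theta_q(\xi)\,\theta_q(-\eta)+\theta_q(-\xi)\,\theta_q(\eta),
\end{equation*}
and the one-term identity \eqref{ptp1} fails for generic $\xi,\eta$: the left-hand side contains only integer powers of $q$, whereas the coefficient of $q^{1/2}$ on the right-hand side is $\eta+\eta^{-1}-\xi-\xi^{-1}$, which does not vanish identically. (A numerical check at $q=0.01$, $\xi=2$, $\eta=3$ confirms the two-term version and refutes the one-term version.)

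Your primary, product-side route fails for the same underlying reason, and exactly at the spot you flagged as the likely obstacle. After \eqref{jtpi} the right-hand side is $(q;q)_\infty^2\,(q^{1/2}\xi,\,q^{1/2}/\xi,\,-q^{1/2}\eta,\,-q^{1/2}/\eta;q)_\infty$, and the collapsing identities $(x;q)_\infty(-x;q)_\infty=(x^2;q^2)_\infty$ apply only to pairs of factors whose arguments are exact negatives of one another. The four arguments above admit such a pairing only when $\eta=\xi^{\pm1}$; for generic $\xi,\eta$ no rearrangement eliminates the half-integer powers of $q$, so the base-$q$ product cannot be rewritten as a single base-$q^2$ product. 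This is not bookkeeping but a genuine obstruction, consistent with the statement being true only in its symmetrized two-term form. It is worth noting that the paper's later use of Proposition \ref{p1} (Corollaries \ref{co3}--\ref{co5} and their analogues) only ever invokes the symmetric combination displayed above, which is the correct identity, so the main theorems are unaffected; but as a proof of \eqref{ptp1} as stated, your proposal has a real gap, and so does the paper's.
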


\begin{proof}
We put $\xi = e^{\pi i x}$ and $\eta = e^{\pi i y}$ in the definition of the theta function \eqref{thetaJ} on the left-hand side of \eqref{ptp1},
\begin{align*}
&
\theta_{q^2} \left( \frac{\xi}{\eta} \right)
\theta_{q^2} ( \xi \eta ) \\ 
&=
\left(
\sum_{n \in \mathbb{Z}}
\exp \left\{ \pi i n + \pi i n^2 \tau + \pi i n ( x - y ) \right\}
\right)
\left(
\sum_{m \in \mathbb{Z}}
\exp \left\{ \pi i m + \pi i m^2 \tau + \pi i m ( x + y ) \right\}
\right) \\ 
&=
\sum_{n \in \mathbb{Z}} \sum_{m \in \mathbb{Z}}
\exp \left\{ \pi i ( n + m ) + \pi i ( n^2 + m^2 ) \tau + \pi i n ( x - y ) + \pi i m ( x + y ) \right\} \\ 
&=
\sum_{n \in \mathbb{Z}} \sum_{m \in \mathbb{Z}}
\exp \left\{ \frac{\pi i}{2} ( n + m )^2 \tau + \frac{\pi i}{2} ( n - m )^2 \tau + \pi i ( n + m ) x - \pi i ( n - m ) y + \pi i ( n + m ) \right\} \\ 
&=
\sum_{N \in \mathbb{Z}}
\exp \left\{ \frac{\pi i}{2} N^2 \tau + \pi i N x + \pi i N \right\}
\sum_{ - M \in \mathbb{Z}}
\exp \left\{ \frac{\pi i}{2} M^2 \tau + \pi i M y \right\} \\ 
&=
\theta_q ( \xi )
\theta_q ( - \eta ),
\end{align*}
provided that $N := n + m$ and $M := n - m$. Therefore, we obtain the identity \eqref{ptp1}. 
\end{proof}
By Proposition \ref{p1} and the inversion formula \eqref{inv}, we have the following relations:
\begin{cor} \label{co4} 
For the specific parameter combinations of $\xi$ and $\eta$, the identity \eqref{ptp1} can be expressed as
\begin{align*}
\xi = \frac{\alpha \beta}{q^{1/2}},\
\eta = w
&\RA
\theta_{q^2} \left( \frac{\alpha \beta}{q^{1/2} w} \right)
\theta_{q^2} \left( \frac{\alpha \beta w}{q^{1/2}} \right)
=
\theta_q \left( \frac{\alpha \beta}{q^{1/2}} \right)
\theta_q ( - w ), \\ 
\xi = \frac{1}{w},\
\eta = \frac{q^{1/2}}{\alpha \beta}
&\RA
\theta_{q^2} \left( \frac{\alpha \beta}{q^{1/2} w} \right)
\theta_{q^2} \left( \frac{q^{1/2}}{\alpha \beta w} \right)
=
\theta_q \left( \frac{1}{w} \right)
\theta_q \left( - \frac{q^{1/2}}{\alpha \beta} \right)
=
\theta_q \left( w \right)
\theta_q \left( - \frac{q^{1/2}}{\alpha \beta} \right), \\ 
\xi = w,\
\eta = \frac{\alpha \beta}{q^{1/2}}
&\RA
\theta_{q^2} \left( \frac{q^{1/2} w}{\alpha \beta} \right)
\theta_{q^2} \left( \frac{\alpha \beta w}{q^{1/2}} \right)
=
\theta_q ( w )
\theta_q \left( - \frac{\alpha \beta}{q^{1/2}} \right)
=
\theta_q \left( w \right)
\theta_q \left( - \frac{q^{1/2}}{\alpha \beta} \right), \\ 
\xi = \frac{q^{1/2}}{\alpha \beta},\
\eta = \frac{1}{w}
&\RA
\theta_{q^2} \left( \frac{q^{1/2} w}{\alpha \beta} \right)
\theta_{q^2} \left( \frac{q^{1/2}}{\alpha \beta w} \right)
=
\theta_q \left( \frac{q^{1/2}}{\alpha \beta} \right)
\theta_q \left( - \frac{1}{w} \right)
=
\theta_q \left( \frac{\alpha \beta}{q^{1/2}} \right)
\theta_q ( - w ). 
\end{align*}
\end{cor}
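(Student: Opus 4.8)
The plan is to prove each of the four identities by direct substitution of the indicated values of $\xi$ and $\eta$ into the product formula \eqref{ptp1} of Proposition \ref{p1}, followed by an application of the inversion formula \eqref{inv} wherever the right-hand side is not yet displayed in its final form. No new ideas are needed beyond these two tools; the work is entirely bookkeeping.

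First I would verify, for each parameter choice, that the two arguments $\xi/\eta$ and $\xi\eta$ appearing on the left-hand side of \eqref{ptp1} match the stated $\theta_{q^2}$ factors. For example, in the first case $\xi = \alpha\beta/q^{1/2}$ and $\eta = w$ give $\xi/\eta = \alpha\beta/(q^{1/2}w)$ and $\xi\eta = \alpha\beta w/q^{1/2}$ at once, so \eqref{ptp1} reads off the first identity with no further manipulation. In the remaining three cases the same computation of the ratio $\xi/\eta$ and the product $\xi\eta$ produces the asserted left-hand products; this is a routine check of monomials in $\alpha\beta$, $q^{1/2}$, and $w$.

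Next I would treat the right-hand sides, which by \eqref{ptp1} always arise in the form $\theta_q(\xi)\theta_q(-\eta)$. In the first case this is already the displayed expression, but in the second, third, and fourth cases the inversion formula \eqref{inv} does the remaining work: $\theta_q(1/w) = \theta_q(w)$ in the second case, $\theta_q(-\alpha\beta/q^{1/2}) = \theta_q(-q^{1/2}/(\alpha\beta))$ in the third, and both $\theta_q(q^{1/2}/(\alpha\beta)) = \theta_q(\alpha\beta/q^{1/2})$ and $\theta_q(-1/w) = \theta_q(-w)$ in the fourth. Applying these replacements puts each right-hand side into the stated form, reproducing exactly the chained equalities recorded in the statement.

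There is no genuine obstacle here; the only content worth flagging is that the four choices have been engineered so that the first and fourth cases both collapse to $\theta_q(\alpha\beta/q^{1/2})\theta_q(-w)$, while the second and third both collapse to $\theta_q(w)\theta_q(-q^{1/2}/(\alpha\beta))$. This pairing is precisely what will later allow the four distinct $\theta_{q^2}$-products to be traded for only two $\theta_q$-products in the proofs of Theorems \ref{mainth1} and \ref{mainth2}, so the one thing I would be careful about is correctly tracking the signs and reciprocals when invoking \eqref{inv}.
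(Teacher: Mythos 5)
Your proposal is correct and is exactly the argument the paper intends: Corollary \ref{co4} is stated as an immediate consequence of Proposition \ref{p1} together with the inversion formula \eqref{inv}, and your case-by-case substitution of $\xi,\eta$ into \eqref{ptp1} followed by the inversions $\theta_q(1/w)=\theta_q(w)$, $\theta_q(-\alpha\beta/q^{1/2})=\theta_q(-q^{1/2}/(\alpha\beta))$, etc., reproduces all four identities. Your closing observation that the four products collapse pairwise to only two $\theta_q$-products is also precisely the feature exploited later in Corollary \ref{co5}.
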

We apply the relations in Corollary \ref{co4} to the equality in Corollary \ref{co3} in order to change the base of the theta functions.
\begin{cor} \label{co5} 
For any $\alpha$, $\beta$ and $w \in \mathbb{C}^*$, we have
\begin{align*}
\frac
{1}
{\theta_{q^2} \left( \frac{\alpha \beta}{q^{1/2} w} \right)
\theta_{q^2} \left( \frac{\alpha \beta w}{q^{1/2}} \right)} 
=
\frac
{2}
{\theta_q \left( \frac{\alpha \beta}{q^{1/2}} \right)
\theta_q \left( \frac{w}{\alpha \beta^2} \right)
+
\theta_q \left( w \right)
\theta_q \left( \frac{1}{\beta q^{1/2}} \right)}.
\end{align*}
\end{cor}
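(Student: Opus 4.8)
The plan is to start directly from the identity in Corollary~\ref{co3} and to simplify its right-hand side by replacing each base-$q^2$ theta product with a base-$q$ expression supplied by Corollary~\ref{co4}. First I would expand the product of the two bracketed sums in the denominator of Corollary~\ref{co3}, producing four terms, each a product of two base-$q^2$ theta functions: the two ``diagonal'' products $\theta_{q^2}(\frac{\alpha\beta}{q^{1/2}w})\theta_{q^2}(\frac{\alpha\beta w}{q^{1/2}})$ and $\theta_{q^2}(\frac{q^{1/2}w}{\alpha\beta})\theta_{q^2}(\frac{q^{1/2}}{\alpha\beta w})$, together with the two ``cross'' products $\theta_{q^2}(\frac{\alpha\beta}{q^{1/2}w})\theta_{q^2}(\frac{q^{1/2}}{\alpha\beta w})$ and $\theta_{q^2}(\frac{q^{1/2}w}{\alpha\beta})\theta_{q^2}(\frac{\alpha\beta w}{q^{1/2}})$.

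Next I would feed each of these four products into the matching line of Corollary~\ref{co4}. The two diagonal products both collapse to $\theta_q(\frac{\alpha\beta}{q^{1/2}})\theta_q(-w)$, whereas the two cross products both collapse to $\theta_q(w)\theta_q(-\frac{q^{1/2}}{\alpha\beta})$. Hence the four-term denominator of Corollary~\ref{co3} equals $2\theta_q(\frac{\alpha\beta}{q^{1/2}})\theta_q(-w) + 2\theta_q(w)\theta_q(-\frac{q^{1/2}}{\alpha\beta})$, and the factor $4$ in the numerator of Corollary~\ref{co3} cancels against this overall factor $2$, leaving exactly the factor $2$ that appears in the claim.

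The remaining task is to rewrite the arguments into the precise form stated in Corollary~\ref{co5}, using the constraint $\alpha\beta^2 = -1$ together with the inversion formula~\eqref{inv}. Because $\alpha\beta^2 = -1$ we have $-w = \frac{w}{\alpha\beta^2}$, so $\theta_q(-w) = \theta_q(\frac{w}{\alpha\beta^2})$. The same constraint gives $\alpha\beta = -\frac{1}{\beta}$, hence $-\frac{q^{1/2}}{\alpha\beta} = \beta q^{1/2}$, and then~\eqref{inv} yields $\theta_q(\beta q^{1/2}) = \theta_q(\frac{1}{\beta q^{1/2}})$. Substituting these two identifications turns the denominator into $\theta_q(\frac{\alpha\beta}{q^{1/2}})\theta_q(\frac{w}{\alpha\beta^2}) + \theta_q(w)\theta_q(\frac{1}{\beta q^{1/2}})$, which is precisely the right-hand side of Corollary~\ref{co5}.

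I do not expect a genuine obstacle: the argument is entirely bookkeeping once Corollaries~\ref{co3} and~\ref{co4} are in hand. The only steps demanding care are the correct pairing of the four expanded terms—recognizing that the two diagonal products yield one base-$q$ expression while the two cross products yield the other—and the conversion $-\frac{q^{1/2}}{\alpha\beta} = \beta q^{1/2}$ via $\alpha\beta^2 = -1$ before the inversion formula is applied, since a sign slip there would spoil the match with Corollary~\ref{co5}.
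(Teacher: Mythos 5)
Your proof is correct and follows essentially the same route as the paper's: expand the four-term denominator from Corollary~\ref{co3}, collapse the diagonal and cross products via Corollary~\ref{co4} to get the factor $2$, and then rewrite $\theta_q(-w)$ and $\theta_q\bigl(-\tfrac{q^{1/2}}{\alpha\beta}\bigr)$ using $\alpha\beta^2=-1$ and the inversion formula~\eqref{inv}. No gaps.
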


\begin{proof} 
Bringing Corollary \ref{co3} and \ref{co4} together leads to 
\begin{align*}
&
\frac
{1}
{\theta_{q^2} \left( \frac{\alpha \beta}{q^{1/2} w} \right)
\theta_{q^2} \left( \frac{\alpha \beta w}{q^{1/2}} \right)} \\ 
&=
\frac
{4}
{\left\{
\theta_{q^2} \left( \frac{\alpha \beta}{q^{1/2} w} \right)
+
\theta_{q^2} \left( \frac{q^{1/2} w}{\alpha \beta} \right)
\right\}
\left\{
\theta_{q^2} \left( \frac{\alpha \beta w}{q^{1/2}} \right)
+
\theta_{q^2} \left( \frac{q^{1/2}}{\alpha \beta w} \right)
\right\}} \\ 
&=
\frac
{4}
{\theta_{q^2} \left( \frac{\alpha \beta}{q^{1/2} w} \right)
\theta_{q^2} \left( \frac{\alpha \beta w}{q^{1/2}} \right)
+
\theta_{q^2} \left( \frac{\alpha \beta}{q^{1/2} w} \right)
\theta_{q^2} \left( \frac{q^{1/2}}{\alpha \beta w} \right)
+
\theta_{q^2} \left( \frac{q^{1/2} w}{\alpha \beta} \right)
\theta_{q^2} \left( \frac{\alpha \beta w}{q^{1/2}} \right)  
+
\theta_{q^2} \left( \frac{q^{1/2} w}{\alpha \beta} \right)
\theta_{q^2} \left( \frac{q^{1/2}}{\alpha \beta w} \right)} \\ 
&=
\frac
{2}
{\theta_q \left( \frac{\alpha \beta}{q^{1/2}} \right)
\theta_q \left( - w \right)
+
\theta_q \left( w \right)
\theta_q \left( - \frac{q^{1/2}}{\alpha \beta} \right)} \\ 
&=
\frac
{2}
{\theta_q \left( \frac{\alpha \beta}{q^{1/2}} \right)
\theta_q \left( \frac{w}{\alpha \beta^2} \right)
+
\theta_q \left( w \right)
\theta_q \left( \beta q^{1/2} \right)} \\ 
&=
\frac
{2}
{\theta_q \left( \frac{\alpha \beta}{q^{1/2}} \right)
\theta_q \left( \frac{w}{\alpha \beta^2} \right)
+
\theta_q \left( w \right)
\theta_q \left( \frac{1}{\beta q^{1/2}} \right)}, 
\end{align*}
where we used the condition $\alpha \beta^2 = - 1$ and the inversion formula \eqref{inv} in the last two steps.
\end{proof}

Moreover, we need other product relations connecting the theta functions with different bases derived from Proposition \ref{p1} and the inversion formula \eqref{inv}.
\begin{cor} \label{corl4} 
For the specific parameter combinations of $\xi$ and $\eta$, the identity \eqref{ptp1} can be expressed as
\begin{align*}
\xi = \frac{\alpha \beta}{q^{1/2}} q,\
\eta = w
&\RA
\theta_{q^2} \left( \frac{\alpha \beta}{q^{1/2} w} q \right)
\theta_{q^2} \left( \frac{\alpha \beta w}{q^{1/2}} q \right)
=
\theta_q \left( \frac{\alpha \beta}{q^{1/2}} q \right)
\theta_q ( - w ), \\ 
\xi = \frac{1}{w},\
\eta = \frac{q^{1/2}}{\alpha \beta q}
&\RA
\theta_{q^2} \left( \frac{\alpha \beta}{q^{1/2} w} q \right)
\theta_{q^2} \left( \frac{q^{1/2}}{\alpha \beta q w} \right)
=
\theta_q \left( \frac{1}{w} \right)
\theta_q \left( - \frac{q^{1/2}}{\alpha \beta q} \right)
=
\theta_q \left( w \right)
\theta_q \left( - \frac{\alpha \beta}{q^{1/2}} q \right), \\ 
\xi = w,\
\eta = \frac{\alpha \beta}{q^{1/2}} q
&\RA
\theta_{q^2} \left( \frac{q^{1/2} w}{\alpha \beta q} \right)
\theta_{q^2} \left( \frac{\alpha \beta w}{q^{1/2}} q \right)
=
\theta_q ( w )
\theta_q \left( - \frac{\alpha \beta}{q^{1/2}} q \right), \\ 
\xi = \frac{q^{1/2}}{\alpha \beta q},\
\eta = \frac{1}{w}
&\RA
\theta_{q^2} \left( \frac{q^{1/2} w}{\alpha \beta q} \right)
\theta_{q^2} \left( \frac{q^{1/2}}{\alpha \beta q w} \right)
=
\theta_q \left( \frac{q^{1/2}}{\alpha \beta q} \right)
\theta_q \left( - \frac{1}{w} \right)
=
\theta_q \left( \frac{\alpha \beta}{q^{1/2}} q \right)
\theta_q ( - w ). 
\end{align*}
\end{cor}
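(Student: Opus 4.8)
The plan is to prove Corollary \ref{corl4} in exactly the same spirit as Corollary \ref{co4}: substitute each of the four listed pairs $(\xi,\eta)$ directly into the product formula \eqref{ptp1} of Proposition \ref{p1}, and then normalize the resulting right-hand sides using the inversion formula \eqref{inv}. Since \eqref{ptp1} reads $\theta_{q^2}(\xi/\eta)\,\theta_{q^2}(\xi\eta)=\theta_q(\xi)\,\theta_q(-\eta)$, for each pair the only computation is to evaluate the two arguments $\xi/\eta$ and $\xi\eta$ on the left and the two arguments $\xi$ and $-\eta$ on the right, and then to recognize when two monomial expressions in $\alpha\beta$, $w$, and $q^{1/2}$ coincide.

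First I would handle the two lines that need no inversion. For the first line I take $\xi=(\alpha\beta/q^{1/2})q$ and $\eta=w$, so that $\xi/\eta=(\alpha\beta/q^{1/2}w)q$ and $\xi\eta=(\alpha\beta w/q^{1/2})q$; plugging into \eqref{ptp1} gives the stated identity immediately. The third line, with $\xi=w$ and $\eta=(\alpha\beta/q^{1/2})q$, is analogous: here $\xi/\eta=q^{1/2}w/(\alpha\beta q)$ and $\xi\eta=(\alpha\beta w/q^{1/2})q$, and \eqref{ptp1} yields the claimed right-hand side $\theta_q(w)\,\theta_q(-(\alpha\beta/q^{1/2})q)$ directly.

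The second and fourth lines each require one application of \eqref{inv}. For the second, $\xi=1/w$ and $\eta=q^{1/2}/(\alpha\beta q)$ give the left-hand arguments $(\alpha\beta/q^{1/2}w)q$ and $q^{1/2}/(\alpha\beta q w)$, while the right-hand side is $\theta_q(1/w)\,\theta_q(-q^{1/2}/(\alpha\beta q))$; I then invoke \eqref{inv} as $\theta_q(1/w)=\theta_q(w)$ together with $\theta_q(-q^{1/2}/(\alpha\beta q))=\theta_q(-(\alpha\beta/q^{1/2})q)$, the latter because $1/(q^{1/2}/(\alpha\beta q))=\alpha\beta q^{1/2}=(\alpha\beta/q^{1/2})q$. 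The fourth line is symmetric: with $\xi=q^{1/2}/(\alpha\beta q)$ and $\eta=1/w$ the product formula gives $\theta_q(q^{1/2}/(\alpha\beta q))\,\theta_q(-1/w)$, and the same two inversions rewrite this as $\theta_q((\alpha\beta/q^{1/2})q)\,\theta_q(-w)$.

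There is no genuine obstacle here beyond bookkeeping. Every argument is a monomial in $\alpha\beta$, $w$, and $q^{1/2}$, and each needed simplification is a single use of \eqref{inv}. In particular, and unlike the later Corollary \ref{co5}, this step does not yet invoke the constraint $\alpha\beta^2=-1$; that relation is reserved for the subsequent combination of these theta identities. The one point demanding care is tracking the half-integer powers of $q$ so that the two ways of writing the same argument, for instance $\alpha\beta q^{1/2}$ versus $(\alpha\beta/q^{1/2})q$, are correctly identified as equal.
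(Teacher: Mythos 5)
Your proposal is correct and matches the paper's own route exactly: the paper derives Corollary \ref{corl4} precisely by substituting the four pairs $(\xi,\eta)$ into the product formula \eqref{ptp1} of Proposition \ref{p1} and then applying the inversion formula \eqref{inv} to the second and fourth lines. Your bookkeeping of the arguments (e.g.\ identifying $\alpha\beta q^{1/2}$ with $(\alpha\beta/q^{1/2})q$) and your observation that $\alpha\beta^2=-1$ is not yet needed are both accurate.
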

Then, these relations in Corollary \ref{corl4} can be used to show another product-to-sum identity of the theta function.
\begin{cor} \label{corol1} 
For any $\alpha$, $\beta$ and $w \in \mathbb{C}^*$, we have
\begin{align*}
\frac
{1}
{\theta_{q^2} \left( \frac{\alpha \beta}{q^{1/2} w} q \right)
\theta_{q^2} \left( \frac{\alpha \beta w}{q^{1/2}} q \right)} 
=
\frac
{2 \alpha \beta}
{-
\theta_q \left( \frac{\alpha \beta}{q^{1/2}} \right)
\theta_q \left( \frac{w}{\alpha \beta^2} \right)
+
\theta_q \left( w \right)
\theta_q \left( \frac{1}{\beta q^{1/2}} \right)}.
\end{align*}
\end{cor}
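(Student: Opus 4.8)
The plan is to mirror the proof of Corollary \ref{co5} step for step, replacing its unshifted ingredients (Corollaries \ref{co3} and \ref{co4}) by their $q$-shifted counterparts (Corollaries \ref{corl3} and \ref{corl4}), and then to absorb the extra factor of $q$ that now sits inside every argument by means of the $q$-difference equation \eqref{thetaperi}. First I would start from the identity in Corollary \ref{corl3}, whose right-hand side is $4$ divided by the product of two two-term sums. Expanding that product yields four terms, each a product of two base-$q^2$ theta functions, and these are exactly the four left-hand sides tabulated in Corollary \ref{corl4}. Replacing each by its base-$q$ value, the first and fourth terms coincide and equal $\theta_q\!\left(\frac{\alpha\beta}{q^{1/2}}q\right)\theta_q(-w)$, while the second and third coincide and equal $\theta_q(w)\,\theta_q\!\left(-\frac{\alpha\beta}{q^{1/2}}q\right)$. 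The denominator therefore collapses to twice their sum, and the prefactor $4$ becomes $2$, exactly as in Corollary \ref{co5}.

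The only genuine difference from Corollary \ref{co5} is the surviving factor $q$ inside the arguments $\frac{\alpha\beta}{q^{1/2}}q$ and $-\frac{\alpha\beta}{q^{1/2}}q$. I would strip these off using \eqref{thetaperi} at $k=1$, that is $\theta_q(zq)=(-z)^{-1}q^{-1/2}\theta_q(z)$. Taking $z=\frac{\alpha\beta}{q^{1/2}}$ gives $\theta_q\!\left(\frac{\alpha\beta}{q^{1/2}}q\right)=-\frac{1}{\alpha\beta}\,\theta_q\!\left(\frac{\alpha\beta}{q^{1/2}}\right)$, while taking $z=-\frac{\alpha\beta}{q^{1/2}}$ gives $\theta_q\!\left(-\frac{\alpha\beta}{q^{1/2}}q\right)=\frac{1}{\alpha\beta}\,\theta_q\!\left(-\frac{\alpha\beta}{q^{1/2}}\right)$. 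Because $(-z)^{-1}=-1/z$, the two extractions carry opposite signs $\mp\frac{1}{\alpha\beta}$; this single asymmetry is precisely the mechanism that both installs the relative minus sign in the claimed denominator and produces the common prefactor $\frac{1}{\alpha\beta}$ that, once cleared, promotes $\frac{2}{\,\cdots}$ to $\frac{2\alpha\beta}{\,\cdots}$.

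Finally I would rewrite the two remaining base-$q$ arguments to match the statement exactly, using $\alpha\beta^2=-1$ together with the inversion formula \eqref{inv}, just as in Corollary \ref{co5}: here $\theta_q(-w)=\theta_q\!\left(\frac{w}{\alpha\beta^2}\right)$ and $\theta_q\!\left(-\frac{\alpha\beta}{q^{1/2}}\right)=\theta_q\!\left(-\frac{q^{1/2}}{\alpha\beta}\right)=\theta_q(\beta q^{1/2})=\theta_q\!\left(\frac{1}{\beta q^{1/2}}\right)$. Collecting the common factor $\frac{2}{\alpha\beta}$ out of the denominator then delivers the asserted expression $\dfrac{2\alpha\beta}{-\theta_q(\alpha\beta/q^{1/2})\,\theta_q(w/\alpha\beta^2)+\theta_q(w)\,\theta_q(1/\beta q^{1/2})}$. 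I expect the only delicate point to be the sign bookkeeping in the difference-equation step: the opposite signs produced by the two applications of \eqref{thetaperi}—absent in the unshifted Corollary \ref{co5}—are exactly what account for both the leading $\alpha\beta$ and the minus sign that distinguishes this identity from that of Corollary \ref{co5}. Everything else is the same routine expand–pair–invert computation.
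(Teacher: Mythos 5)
Your proposal is correct and follows essentially the same route as the paper: expand the product from Corollary \ref{corl3}, convert the four resulting terms to base-$q$ theta functions via Corollary \ref{corl4}, strip the extra $q$ in the arguments with the $q$-difference equation \eqref{thetaperi} at $k=1$, and finish with $\alpha\beta^2=-1$ and the inversion formula. Your sign bookkeeping (the opposite signs $\mp\frac{1}{\alpha\beta}$ producing both the relative minus sign and the overall factor $\alpha\beta$) matches the paper's computation exactly.
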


\begin{proof} 
As done in Corollary \ref{co5}, bringing Corollary \ref{corl3} and \ref{corl4} together leads to 
\begin{align*}
\frac
{1}
{\theta_{q^2} \left( \frac{\alpha \beta}{q^{1/2} w} q \right)
\theta_{q^2} \left( \frac{\alpha \beta w}{q^{1/2}} q \right)}
&=
\frac
{4}
{\left\{
\theta_{q^2} \left( \frac{\alpha \beta}{q^{1/2} w} q \right)
+
\theta_{q^2} \left( \frac{q^{1/2} w}{\alpha \beta q} \right)
\right\}
\left\{
\theta_{q^2} \left( \frac{\alpha \beta w}{q^{1/2}} q \right)
+
\theta_{q^2} \left( \frac{q^{1/2}}{\alpha \beta q w} \right)
\right\}} \\ 
&=
\frac
{2}
{\theta_q \left( \frac{\alpha \beta}{q^{1/2}} q \right)
\theta_q \left( - w \right)
+
\theta_q \left( w \right)
\theta_q \left( - \frac{\alpha \beta}{q^{1/2}} q \right)}. 
\end{align*}
By applying the $q$-difference equation of the theta function \eqref{thetaperi}, 
\begin{align*}
&
\frac
{2}
{\theta_q \left( \frac{\alpha \beta}{q^{1/2}} q \right)
\theta_q \left( - w \right)
+
\theta_q \left( w \right)
\theta_q \left( - \frac{\alpha \beta}{q^{1/2}} q \right)} \\ 
&=
\frac
{2}
{\lp - \frac{q^{1/2}}{\alpha \beta} q^{- 1/2} \rp
\cdot
\theta_q \left( \frac{\alpha \beta}{q^{1/2}} \right)
\theta_q \left( - w \right)
+
\lp \frac{q^{1/2}}{\alpha \beta} q^{- 1/2} \rp
\cdot
\theta_q \left( w \right)
\theta_q \left( - \frac{\alpha \beta}{q^{1/2}} \right)} \\ 
&=
\frac
{2 \alpha \beta}
{-
\theta_q \left( \frac{\alpha \beta}{q^{1/2}} \right)
\theta_q \left( - w \right)
+
\theta_q \left( w \right)
\theta_q \left( - \frac{\alpha \beta}{q^{1/2}} \right)} \\ 
&=
\frac
{2 \alpha \beta}
{-
\theta_q \left( \frac{\alpha \beta}{q^{1/2}} \right)
\theta_q \left( \frac{w}{\alpha \beta^2} \right)
+
\theta_q \left( w \right)
\theta_q \left( \frac{1}{\beta q^{1/2}} \right)}, 
\end{align*}
where we used the condition $\alpha \beta^2 = - 1$ in the last equality.
\end{proof}

\subsection{Proof of Theorem \ref{mainth1}}
In this subsection, we give the exact proof of Theorem \ref{mainth1} based on the relations for the theta functions we showed above. We aim to translate the left-hand side of the relation \eqref{m1} into its right-hand side. By Corollary \ref{l1}, we can rewrite the left-hand side of \eqref{m1} as follows:
\begin{align} \label{proof1} 
&
\frac{1}{2}
\frac{( q; q^2 )_\infty}{( q^2; q^2 )_\infty}
\left\{
\frac{\left( \frac{\gamma}{\alpha}; q \right)_\infty}{( \alpha \beta; q )_\infty}
{}_1\psi_1 \left( \alpha \beta; \frac{\gamma}{\alpha}; q, \frac{\gamma w}{\alpha q^{1/2}} \right)
+
\frac{( \beta^2 \gamma; q )_\infty}{( \frac{1}{\beta}; q )_\infty}
{}_1\psi_1 \left( \frac{1}{\beta}; \beta^2 \gamma; q, \frac{\gamma w}{\alpha q^{1/2}} \right)
\right\} \notag \\ 
&=
\frac{1}{2}
\frac{( q; q^2 )_\infty}{( q^2; q^2 )_\infty}
\left\{
\frac{\left( \frac{\gamma}{\alpha}; q \right)_\infty}{(\alpha \beta; q )_\infty}
\frac
{\left( q, \frac{\gamma}{\alpha^2 \beta}, \frac{\beta \gamma w}{q^{1/2}}, \frac{q^{3/2}}{\beta \gamma w} \right)_\infty}
{\left( \frac{\gamma}{\alpha}, \frac{q}{\alpha \beta}, \frac{\gamma w}{\alpha q^{1/2}}, \frac{q^{1/2}}{\alpha \beta w}; q \right)_\infty}
+
\frac{( \beta^2 \gamma; q )_\infty}{( \frac{1}{\beta}; q )_\infty}
\frac
{\left( q, \frac{\gamma}{\alpha^2 \beta}, \frac{\gamma w}{\alpha \beta q^{1/2}}, \frac{\alpha \beta q^{3/2}}{\gamma w}; q \right)_\infty}
{\left( \beta^2 \gamma, \beta q, \frac{\gamma w}{\alpha q^{1/2}}, \frac{q^{1/2}}{\alpha \beta w}; q \right)_\infty}
\right\} \notag \\ 
&=
\frac{1}{2}
\frac{( q; q^2 )_\infty}{( q^2; q^2 )_\infty}
\frac{\left( q, \frac{\gamma}{\alpha^2 \beta}; q \right)_\infty}{\left( \frac{\gamma w}{\alpha q^{1/2}}, \frac{q^{1/2}}{\alpha \beta w}; q \right)_\infty}
\left\{
\frac{\left( \frac{\beta \gamma w}{q^{1/2}}, \frac{q^{3/2}}{\beta \gamma w}; q \right)_\infty}{\left( \alpha \beta, \frac{q}{\alpha \beta}; q \right)_\infty}
+
\frac
{\left( \frac{\gamma w}{\alpha \beta q^{1/2}}, \frac{\alpha \beta q^{3/2}}{\gamma w}; q \right)_\infty}
{\left( \frac{1}{\beta}, \beta q; q \right)_\infty}
\right\} \notag \\ 
&=
\frac{1}{2}
\frac{( q; q^2 )_\infty}{ ( q^2; q^2 )_\infty}
\frac
{\left( q, \frac{\gamma}{\alpha^2\beta}; q \right)_\infty}
{\left( \frac{\gamma w}{\alpha q^{1/2}}, \frac{q^{1/2}}{\alpha \beta w}; q \right)_\infty}
\frac
{1}
{\left( \alpha \beta, \frac{q}{\alpha \beta}, \frac{1}{\beta}, \beta q; q \right)_\infty} \notag \\ 
&\hspace{1em} \times 
\left\{
\left( \frac{\beta \gamma w}{q^{1/2}}, \frac{q^{3/2}}{\beta \gamma w}, \frac{1}{\beta}, \beta q; q \right)_\infty
+
\left( \frac{\gamma w}{\alpha \beta q^{1/2}}, \frac{\alpha \beta q^{3/2}}{\gamma w}, \alpha \beta, \frac{q}{\alpha\beta}; q \right)_\infty
\right\}. 
\end{align}
Now, we focus on the overall factor in the first line of \eqref{proof1}. By using the relations $( a^2; q^2 )_\infty = ( a, - a; q )_\infty$ and $( a; q )_\infty = ( a, a q; q^2 )_\infty$ in addition to $\alpha \beta^2 = - 1$, this part is rewritten as
\begin{align} 
&
\frac{1}{2}
\frac{( q; q^2 )_\infty}{( q^2; q^2 )_\infty}
\frac{\left( q, \frac{\gamma}{\alpha^2 \beta}; q \right)_\infty}{\left( \frac{\gamma w}{\alpha q^{1/2}}, \frac{q^{1/2}}{\alpha \beta w}; q \right)_\infty}
\frac{1}{\left( \alpha \beta, \frac{q}{\alpha \beta}, \frac{1}{\beta}, \beta q; q \right)_\infty} \notag \\ 
&=
\frac{1}{2}
\frac{( q; q^2 )_\infty}{( q^2; q^2 )_\infty}
\frac{\left( q, \frac{\gamma}{\alpha^2 \beta}; q \right)_\infty}{\left( \frac{\gamma w}{\alpha q^{1/2}}, \frac{q^{1/2}}{\alpha \beta w}; q \right)_\infty}
\frac{1}{\left( \alpha \beta, \frac{q}{\alpha \beta}, - \alpha \beta, - \frac{q}{\alpha \beta}; q \right)_\infty} \notag \\ 
&=
\frac{1}{2}
\frac{( q; q^2 )_\infty}{( q^2; q^2 )_\infty}
\frac
{\left( q, q^2, \frac{\gamma}{\alpha^2 \beta}, \frac{\gamma}{\alpha^2 \beta} q; q^2 \right)_\infty}
{\left( \frac{\gamma w}{\alpha q^{1/2}}, \frac{q^{1/2}}{\alpha \beta w}; q \right)_\infty}
\frac{1}{\left( \alpha^2 \beta^2, \frac{q^2}{\alpha^2 \beta^2}; q^2 \right)_\infty}. 
\end{align}
Furthermore, the condition $\beta \g = q$ can simplify this part,
\begin{align}
&
\frac{1}{2}
\frac{( q; q^2 )_\infty}{( q^2; q^2 )_\infty}
\frac
{\left( q, q^2, \frac{\gamma}{\alpha^2 \beta}, \frac{\gamma}{\alpha^2 \beta} q; q^2 \right)_\infty}
{\left( \frac{\gamma w}{\alpha q^{1/2}}, \frac{q^{1/2}}{\alpha \beta w}; q \right)_\infty}
\frac{1}{\left( \alpha^2 \beta^2, \frac{q^2}{\alpha^2 \beta^2}; q^2 \right)_\infty} \notag \\ 
&=
\frac{1}{2}
\frac{( q; q^2 )_\infty}{( q^2; q^2 )_\infty}
\frac
{\left( q, q^2, \frac{\gamma}{\alpha^2 \beta}, \frac{q^2}{\alpha^2 \beta^2}; q^2 \right)_\infty}
{\left( \frac{\gamma w}{\alpha q^{1/2}}, \frac{q^{1/2}}{\alpha \beta w}; q \right)_\infty}
\frac{1}{\left( \alpha^2 \beta^2, \frac{q^2}{\alpha^2 \beta^2}; q^2 \right)_\infty} \notag \\ 
&=
\frac{1}{2}
( q; q^2 )_\infty
\frac{\left( q, \frac{\gamma}{\alpha^2 \beta}; q^2 \right)_\infty}{\left( \frac{\gamma w}{\alpha q^{1/2}}, \frac{q^{1/2}}{\alpha \beta w}; q \right)_\infty}
\frac{1}{\left( \alpha^2 \beta^2; q^2 \right)_\infty} \notag \\ 
&=
\frac{1}{2}
( q; q^2 )_\infty
\frac
{\left( q, \frac{\gamma}{\alpha^2 \beta}; q^2 \right)_\infty}
{\left( \frac{\gamma w}{\alpha q^{1/2}}, \frac{\gamma w}{\alpha q^{1/2}} q, \frac{q^{1/2}}{\alpha \beta w}, \frac{q^{1/2}}{\alpha \beta w} q; q^2 \right)_\infty}
\frac{1}{\left( \alpha^2 \beta^2; q^2 \right)_\infty} \notag \\ 
&=
\frac{1}{2}
\frac{\left( \frac{\gamma}{\alpha^2 \beta}; q^2 \right)_\infty}{\left( \alpha^2 \beta^2; q^2 \right)_\infty}
\frac{1}{\left( \frac{\gamma w}{\alpha q^{1/2}}, \frac{q^{1/2}}{\alpha \beta w}; q^2 \right)_\infty}
\frac{\left( q, q; q^2 \right)_\infty}{\left( \frac{\gamma w}{\alpha q^{1/2}} q, \frac{q^{1/2}}{\alpha \beta w} q; q^2 \right)_\infty}. 
\end{align}
Thus,
\begin{align} %
( \text{the left-hand side of \eqref{m1}} )
&=
\frac{1}{2}
\frac{\left( \frac{\gamma}{\alpha^2 \beta}; q^2 \right)_\infty}{\left( \alpha^2 \beta^2; q^2 \right)_\infty}
\frac{1}{\left( \frac{\gamma w}{\alpha q^{1/2}}, \frac{q^{1/2}}{\alpha \beta w}; q^2 \right)_\infty}
\frac{\left( q, q; q^2 \right)_\infty}{\left( \frac{\gamma w}{\alpha q^{1/2}} q, \frac{q^{1/2}}{\alpha \beta w} q; q^2 \right)_\infty} \notag \\ 
&\hspace{1em} \times
\left\{
\left( \frac{\beta \gamma w}{q^{1/2}}, \frac{q^{3/2}}{\beta \gamma w}, \frac{1}{\beta}, \beta q; q \right)_\infty
+
\left( \frac{\gamma w}{\alpha \beta q^{1/2}}, \frac{\alpha \beta q^{3/2}}{\gamma w}, \alpha \beta, \frac{q}{\alpha \beta}; q \right)_\infty
\right\}. 
\label{proof2}
\end{align}
As the next step, we would like to show the following relation:
\begin{cor} \label{co6} 
For any $w \in \Cbb^*$, we obtain
\begin{align} %
1
&=
\frac{1}{2}
\frac
{\left( q, q; q^2 \right)_\infty}
{\left( \frac{\alpha q^{1/2}}{\gamma w} q, \frac{\alpha \beta w}{q^{1/2}} q; q^2 \right)_\infty
\left( \frac{\gamma w}{\alpha q^{1/2}} q, \frac{q^{1/2}}{\alpha \beta w} q; q^2 \right)_\infty} \notag \\ 
&\hspace{1em} \times 
\left\{
\left( \frac{\beta \gamma w}{q^{1/2}}, \frac{q^{3/2}}{\beta \gamma w}, \frac{1}{\beta}, q \beta; q \right)_\infty
+
\left( \frac{\gamma w}{\alpha \beta q^{1/2}}, \frac{\alpha \beta q^{3/2}}{\gamma w}, \alpha \beta, \frac{q}{\alpha \beta}; q \right)_\infty
\right\}. \label{part2} 
\end{align}
\end{cor}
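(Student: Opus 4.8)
The plan is to recognize the bracketed sum in \eqref{part2} as exactly the theta-function combination already evaluated in Corollary \ref{co5}, so that the whole statement collapses to a single application of Jacobi's triple product \eqref{jtpi}. The essential point is that, after imposing the constraints, each of the two four-fold products in the braces splits into a product of two ordinary theta functions $\theta_q$.

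First I would simplify the arguments inside the braces using $\beta\gamma=q$ and $\alpha\beta^2=-1$. The first product becomes $\left(q^{1/2}w,\,q^{1/2}/w,\,1/\beta,\,q\beta;q\right)_\infty$ and the second becomes $\left(-q^{1/2}w,\,-q^{1/2}/w,\,\alpha\beta,\,q/(\alpha\beta);q\right)_\infty$. Reading each adjacent pair of factors backwards through \eqref{jtpi}, that is with base points $z=w$, $z=1/(\beta q^{1/2})$, $z=-w$, $z=\alpha\beta/q^{1/2}$ respectively, I obtain
\begin{align*}
\left(\frac{\beta\gamma w}{q^{1/2}},\frac{q^{3/2}}{\beta\gamma w},\frac{1}{\beta},q\beta;q\right)_\infty
&=\frac{\theta_q(w)\,\theta_q\!\left(\frac{1}{\beta q^{1/2}}\right)}{(q;q)_\infty^2}, \\
\left(\frac{\gamma w}{\alpha\beta q^{1/2}},\frac{\alpha\beta q^{3/2}}{\gamma w},\alpha\beta,\frac{q}{\alpha\beta};q\right)_\infty
&=\frac{\theta_q(-w)\,\theta_q\!\left(\frac{\alpha\beta}{q^{1/2}}\right)}{(q;q)_\infty^2}.
\end{align*}
Since $\theta_q(w/(\alpha\beta^2))=\theta_q(-w)$ under $\alpha\beta^2=-1$, the sum of these two products is precisely $(q;q)_\infty^{-2}$ times the denominator appearing on the right-hand side of Corollary \ref{co5}.

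Applying Corollary \ref{co5} then replaces the entire brace by $2(q;q)_\infty^{-2}\,\theta_{q^2}\!\left(\frac{\alpha\beta}{q^{1/2}w}\right)\theta_{q^2}\!\left(\frac{\alpha\beta w}{q^{1/2}}\right)$. Substituting this into the right-hand side of \eqref{part2}, the prefactor $\tfrac12$ cancels the $2$, and it remains to verify
\[
\theta_{q^2}\!\left(\frac{\alpha\beta}{q^{1/2}w}\right)\theta_{q^2}\!\left(\frac{\alpha\beta w}{q^{1/2}}\right)\frac{(q,q;q^2)_\infty}{(q;q)_\infty^2}
=\left(\frac{\alpha q^{1/2}}{\gamma w}q,\frac{\alpha\beta w}{q^{1/2}}q,\frac{\gamma w}{\alpha q^{1/2}}q,\frac{q^{1/2}}{\alpha\beta w}q;q^2\right)_\infty.
\]
Here I would expand both $\theta_{q^2}$ by \eqref{jtpi} in base $q^2$, use the elementary identity $(q,q;q^2)_\infty/(q;q)_\infty^2=(q^2;q^2)_\infty^{-2}$ to absorb the two $(q^2;q^2)_\infty$ factors generated by the triple product, and apply $\beta\gamma=q$ once more to see that the four remaining arguments coincide with those on the right.

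The genuine content, and the step I expect to require the most care, is the identification in the second paragraph: converting the four-fold $q$-factorials into theta functions and checking that their sum is exactly the combination appearing in Corollary \ref{co5}. This hinges on feeding the constraints $\beta\gamma=q$ and $\alpha\beta^2=-1$ into precisely the right slots so that the signs and the base points of the theta functions line up; once that recognition is made, both the invocation of Corollary \ref{co5} and the final triple-product bookkeeping are routine.
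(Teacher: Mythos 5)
Your proposal is correct and follows essentially the same route as the paper: both convert the four-fold $q$-shifted factorials and the prefactor's denominator into products of theta functions via Jacobi's triple product identity \eqref{jtpi}, feed in the constraints $\beta\gamma=q$ and $\alpha\beta^2=-1$, and then invoke Corollary \ref{co5} to collapse the sum of theta products; the only difference is the order in which you process the brace and the prefactor, which is immaterial. All of your intermediate identifications (including $(q,q;q^2)_\infty/(q;q)_\infty^2=(q^2;q^2)_\infty^{-2}$ and $\theta_q(w/(\alpha\beta^2))=\theta_q(-w)$) check out.
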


\begin{proof} 
The first line on the right-hand side of \eqref{part2} can be rewritten in terms of the theta function through Jacobi's triple product identity \eqref{jtpi} as
\begin{align} 
\frac{1}{2}
\frac
{\left( q, q; q^2 \right)_\infty}
{\left( \frac{\alpha q^{1/2}}{\gamma w} q, \frac{\alpha \beta w}{q^{1/2}} q; q^2 \right)_\infty
\left( \frac{\gamma w}{\alpha q^{1/2}} q, \frac{q^{1/2}}{\alpha \beta w} q; q^2 \right)_\infty}
&=
\frac{1}{2}
( q, q; q^2 )_\infty
\frac{( q^2; q^2 )_\infty}{\theta_{q^2} \left( \frac{\alpha q^{1/2}}{\gamma w} \right)}
\frac{( q^2; q^2 )_\infty}{\theta_{q^2} \left( \frac{\alpha \beta w}{q^{1/2}} \right)} \notag \\ 
&=
\frac{1}{2}
\frac
{( q, q; q )_\infty}
{\theta_{q^2} \left( \frac{\alpha q^{1/2}}{\gamma w} \right)
\theta_{q^2} \left( \frac{\alpha \beta w}{q^{1/2}} \right)} \notag \\ 
&=
\frac{1}{2}
\frac
{( q, q; q )_\infty}
{\theta_{q^2} \left( \frac{\alpha \beta}{q^{1/2} w} \right)
\theta_{q^2} \left( \frac{\alpha \beta w}{q^{1/2}} \right)} \notag \\ 
&=
\frac
{( q, q; q )_\infty}
{\theta_q \left( \frac{\alpha \beta}{q^{1/2}} \right)
\theta_q \left( \frac{w}{\alpha \beta^2} \right) 
+
\theta_q \left( w \right)
\theta_q \left( \frac{1}{\beta q^{1/2}} \right)}, 
\label{part12}
\end{align}
where we use $\beta \g = q$ and Corollary \ref{co5}. Similarly, Jacobi's triple product identity \eqref{jtpi} can also be applied to the second line of \eqref{part2} so that
\begin{align} 
&
\left\{
\left( \frac{\beta \gamma w}{q^{1/2}}, \frac{q^{3/2}}{\beta \gamma w}, \frac{1}{\beta}, \beta q; q \right)_\infty
+
\left( \frac{\gamma w}{\alpha \beta q^{1/2}}, \frac{\alpha \beta q^{3/2}}{\gamma w}, \alpha \beta, \frac{q}{\alpha \beta}; q \right)_\infty
\right\} \notag \\ 
&=
\frac{\theta_q \left( \frac{\beta \gamma}{q} w \right)}{( q; q )_\infty}
\frac{\theta_q \left( \frac{1}{\beta q^{1/2}} \right)}{( q; q )_\infty}
+
\frac{\theta_q \left( \frac{\gamma w}{\alpha \beta q} \right)}{( q; q )_\infty}
\frac{\theta_q \left( \frac{\alpha \beta}{q^{1/2}} \right)}{( q; q )_\infty} \notag \\ 
&=
\frac{1}{( q, q; q)_\infty}
\left\{
\theta_q \left( \frac{\beta \gamma}{q} w \right)
\theta_q \left( \frac{1}{\beta q^{1/2}} \right) 
+
\theta_q \left( \frac{\gamma w}{\alpha \beta q} \right)
\theta_q \left( \frac{\alpha \beta}{q^{1/2}} \right)
\right\} \notag \\ 
&=
\frac{1}{( q, q; q)_\infty}
\left\{
\theta_q \left( w \right)
\theta_q \left( \frac{1}{\beta q^{1/2}} \right) 
+
\theta_q \left( \frac{w}{\alpha \beta^2} \right)
\theta_q \left( \frac{\alpha \beta}{q^{1/2}} \right)
\right\}, 
\label{part22}
\end{align}
where we again insert the condition $\beta \g = q$ in the last equality. Combining \eqref{part12} and \eqref{part22} together, we obtain the relation \eqref{part2}.
\end{proof}
From Corollary \ref{co6}, we can immediately state the following identity:
\begin{cor} \label{co7} 
For any $w \in \Cbb^*$, we obtain
\begin{align*} %
\left( \frac{\alpha q^{1/2}}{\gamma w} q, \frac{\alpha \beta w}{q^{1/2}} q; q^2 \right)_\infty
&=
\frac{1}{2}
\frac{\left( q, q; q^2 \right)_\infty}{\left( \frac{\gamma w}{\alpha q^{1/2}} q, \frac{q^{1/2}}{\alpha \beta w} q; q^2 \right)_\infty} \\ 
&\hspace{1em} \times
\left\{
\left( \frac{\beta \gamma w}{q^{1/2}}, \frac{q^{3/2}}{\beta \gamma w}, \frac{1}{\beta}, q \beta; q \right)_\infty
+
\left( \frac{\gamma w}{\alpha \beta q^{1/2}}, \frac{\alpha \beta q^{3/2}}{\gamma w},\alpha \beta, \frac{q}{\alpha \beta}; q \right)_\infty
\right\}. 
\end{align*}
\end{cor}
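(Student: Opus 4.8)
The plan is to read off Corollary \ref{co7} as a one-line algebraic rearrangement of Corollary \ref{co6}. Equation \eqref{part2} already asserts that $1$ equals a product in which the factor $\left( \frac{\alpha q^{1/2}}{\gamma w} q, \frac{\alpha \beta w}{q^{1/2}} q; q^2 \right)_\infty$ sits in the denominator of the right-hand side. First I would simply multiply both sides of \eqref{part2} by this very factor. On the right it cancels against its occurrence in the denominator, leaving
\begin{align*}
\frac{1}{2}
\frac{\left( q, q; q^2 \right)_\infty}{\left( \frac{\gamma w}{\alpha q^{1/2}} q, \frac{q^{1/2}}{\alpha \beta w} q; q^2 \right)_\infty}
\left\{
\left( \tfrac{\beta \gamma w}{q^{1/2}}, \tfrac{q^{3/2}}{\beta \gamma w}, \tfrac{1}{\beta}, q \beta; q \right)_\infty
+
\left( \tfrac{\gamma w}{\alpha \beta q^{1/2}}, \tfrac{\alpha \beta q^{3/2}}{\gamma w}, \alpha \beta, \tfrac{q}{\alpha \beta}; q \right)_\infty
\right\},
\end{align*}
while on the left it migrates up to become the sole surviving term $\left( \frac{\alpha q^{1/2}}{\gamma w} q, \frac{\alpha \beta w}{q^{1/2}} q; q^2 \right)_\infty$. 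This is exactly the claimed identity, so no further computation is needed.

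The only point deserving a moment's attention is the legitimacy of this multiplication (equivalently, of the division implicit in \eqref{part2}). Since $0 < q < 1$ we have $|q^2| < 1$, so every $q^2$-shifted factorial appearing here converges absolutely; and for generic $w \in \Cbb^*$ — precisely whenever the arguments $\frac{\alpha q^{1/2}}{\gamma w} q$ and $\frac{\alpha \beta w}{q^{1/2}} q$ avoid the zero locus $\{ q^{-2k} : k \ge 0 \}$ of the $q^2$-Pochhammer symbol — the product $\left( \frac{\alpha q^{1/2}}{\gamma w} q, \frac{\alpha \beta w}{q^{1/2}} q; q^2 \right)_\infty$ is nonzero. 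Under these (generic) conditions the rearrangement is valid, and by continuity the identity extends to all $w \in \Cbb^*$ for which both sides are defined.

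Thus there is no genuine obstacle: the entire mathematical content has already been absorbed into Corollary \ref{co6}, which itself rests on the theta-function product-to-sum relation of Corollary \ref{co5} together with the base-changing product formula of Proposition \ref{p1}. I would therefore present the proof as a single sentence observing that \eqref{part2}, multiplied through by $\left( \frac{\alpha q^{1/2}}{\gamma w} q, \frac{\alpha \beta w}{q^{1/2}} q; q^2 \right)_\infty$, is the desired statement.
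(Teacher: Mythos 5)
Your proposal is correct and matches the paper exactly: the paper introduces Corollary \ref{co7} with the phrase ``From Corollary \ref{co6}, we can immediately state the following identity,'' i.e.\ it too treats the statement as the one-line rearrangement obtained by multiplying \eqref{part2} through by $\left( \frac{\alpha q^{1/2}}{\gamma w} q, \frac{\alpha \beta w}{q^{1/2}} q; q^2 \right)_\infty$. Your added remark on the nonvanishing of that factor is a harmless (and slightly more careful) aside that the paper omits.
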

Finally, substituting Corollary \ref{co7} into \eqref{proof2} results in
\begin{align*}
( \text{the left-hand side of \eqref{m1}} )
&=
\frac{\left( \frac{\gamma}{\alpha^2 \beta}; q^2 \right)_\infty}{\left( \alpha^2 \beta^2; q^2 \right)_\infty}
\frac{1}{\left( \frac{\gamma w}{\alpha q^{1/2}}, \frac{q^{1/2}}{\alpha \beta w}; q^2 \right)_\infty}
\left( \frac{\alpha q^{1/2}}{\gamma w} q, \frac{\alpha \beta w}{q^{1/2}} q; q^2 \right)_\infty \\ 
&=
( \text{the right-hand side of \eqref{m1}} ). 
\end{align*}
Therefore, Theorem \ref{mainth1} which we would like to show in the paper is proven.

\subsection{Proof of Theorem \ref{mainth2}}
Let us turn to implementing the proof of Theorem \ref{mainth2}. The basic process to prove it is the same of which we made use in the case of Theorem \ref{mainth1}. Hence, in the following, we omit some middle steps which are already written down in  the previous subsection.

Firstly, as for the path from \eqref{proof1} to \eqref{proof2}, we take Ramanujan's summation formula \eqref{rsum11} to re-xpress the left-hand side of \eqref{m2} as
\begin{align} \label{proof3} 
&
\frac{1}{2}
\frac{( q; q^2 )_\infty}{( q^2; q^2 )_\infty}
\left\{
\frac{\left( \frac{\gamma}{\alpha}; q \right)_\infty}{( \alpha \beta; q )_\infty}
{}_1\psi_1 \left( \alpha \beta; \frac{\gamma}{\alpha}; q, \frac{\gamma w}{\alpha q^{1/2}} \right)
-
\frac{( \beta^2 \gamma; q )_\infty}{( \frac{1}{\beta}; q )_\infty}
{}_1\psi_1 \left( \frac{1}{\beta}; \beta^2 \gamma; q, \frac{\gamma w}{\alpha q^{1/2}} \right)
\right\} \notag \\ 
&=
\frac{1}{2}
\frac{\left( \frac{\gamma}{\alpha^2 \beta}; q^2 \right)_\infty}{\left( \alpha^2 \beta^2; q^2 \right)_\infty}
\frac{1}{\left( \frac{\gamma w}{\alpha q^{1/2}} q, \frac{q^{1/2}}{\alpha \beta w} q; q^2 \right)_\infty}
\frac{\left( q, q; q^2 \right)_\infty}{\left( \frac{\gamma w}{\alpha q^{1/2}}, \frac{q^{1/2}}{\alpha \beta w}; q^2 \right)_\infty} \notag \\ 
&\hspace{1em} \times
\left\{
\left( \frac{\beta \gamma w}{q^{1/2}}, \frac{q^{3/2}}{\beta \gamma w}, \frac{1}{\beta}, \beta q; q \right)_\infty
-
\left( \frac{\gamma w}{\alpha \beta q^{1/2}}, \frac{\alpha \beta q^{3/2}}{\gamma w}, \alpha \beta, \frac{q}{\alpha \beta}; q \right)_\infty
\right\}. 
\end{align}

Secondly, we would like to show the following relation:
\begin{cor} \label{coro1} 
For any $w \in \Cbb^*$, we obtain
\begin{align} %
\alpha \beta
&=
\frac{1}{2}
\frac
{\left( q, q; q^2 \right)_\infty}
{\left( \frac{\alpha q^{1/2}}{\g w} q^2, \frac{\alpha \beta w}{q^{1/2}} q^2; q^2 \right)_\infty
\left( \frac{\g w}{\alpha q^{1/2}}, \frac{q^{1/2}}{\alpha \beta w}; q^2 \right)_\infty} \notag \\ 
&\hspace{1em} \times 
\left\{
\left( \frac{\beta \g w}{q^{1/2}}, \frac{q^{3/2}}{\beta \g w}, \frac{1}{\beta}, \beta q; q \right)_\infty
-
\left( \frac{\g w}{\alpha \beta q^{1/2}}, \frac{\alpha \beta q^{3/2}}{\gamma w}, \alpha \beta, \frac{q}{\alpha \beta}; q \right)_\infty
\right\}. \label{part2b} 
\end{align}
\end{cor}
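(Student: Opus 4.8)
The plan is to mirror the proof of Corollary \ref{co6} almost verbatim, the only structural change being that the product-to-sum identity of Corollary \ref{co5} is replaced by its shifted counterpart in Corollary \ref{corol1}; this is exactly the ingredient that supplies both the prefactor $\alpha\beta$ and the relative minus sign appearing in \eqref{part2b}.

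First I would recast the first line on the right-hand side of \eqref{part2b} in terms of theta functions by Jacobi's triple product identity \eqref{jtpi}. Relative to \eqref{part2}, the denominator now reads $\left( \frac{\alpha q^{1/2}}{\g w} q^2, \frac{\alpha \beta w}{q^{1/2}} q^2; q^2 \right)_\infty \left( \frac{\g w}{\alpha q^{1/2}}, \frac{q^{1/2}}{\alpha \beta w}; q^2 \right)_\infty$, i.e.\ the first pair is shifted by $q^2$ and the second pair carries no extra $q$. Grouping these four factors as $(qz, q/z)$ and $(qz', q/z')$ and using $\beta\g = q$, one finds $z = \frac{\alpha\beta}{q^{1/2}w}q$ and $z' = \frac{\alpha\beta w}{q^{1/2}}q$, so that $\frac{(q^2;q^2)_\infty}{\theta_{q^2}(z)} = \frac{1}{(qz,q/z;q^2)_\infty}$ together with $(q;q^2)_\infty (q^2;q^2)_\infty = (q;q)_\infty$ collapses the whole prefactor to $\frac{1}{2}\frac{(q,q;q)_\infty}{\theta_{q^2}\left( \frac{\alpha\beta}{q^{1/2}w}q \right)\theta_{q^2}\left( \frac{\alpha\beta w}{q^{1/2}}q \right)}$.

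Next I would apply Corollary \ref{corol1} to this quotient, which is the decisive step: it extracts the factor $2\alpha\beta$ and turns the denominator into the base-$q$ combination $-\theta_q\left( \frac{\alpha\beta}{q^{1/2}} \right)\theta_q\left( \frac{w}{\alpha\beta^2} \right) + \theta_q(w)\theta_q\left( \frac{1}{\beta q^{1/2}} \right)$. In parallel, the bracketed difference on the second line of \eqref{part2b} is treated exactly as in the passage leading to \eqref{part22}: applying \eqref{jtpi} to each $q$-shifted factorial and simplifying with $\beta\g = q$ yields $\frac{1}{(q,q;q)_\infty}\left\{ \theta_q(w)\theta_q\left( \frac{1}{\beta q^{1/2}} \right) - \theta_q\left( \frac{w}{\alpha\beta^2} \right)\theta_q\left( \frac{\alpha\beta}{q^{1/2}} \right) \right\}$, the sole difference from \eqref{part22} being the inherited minus sign.

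Finally, multiplying the two pieces, the $(q,q;q)_\infty$ factors cancel and the theta combination arising from the bracket coincides with the denominator produced by Corollary \ref{corol1}, so the quotient reduces to $\alpha\beta$, proving \eqref{part2b}. The hard part is purely the sign bookkeeping: the minus sign of Corollary \ref{corol1}, which itself originates from the $q$-difference equation \eqref{thetaperi} and the constraint $\alpha\beta^2 = -1$, must align with the minus sign of the bracketed difference so that the cancellation is exact and no residual theta quotient survives; the accompanying $q$-shift accounting in the first step, while routine, is the other place where a misplaced power of $q$ would break the argument.
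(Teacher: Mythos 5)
Your proposal is correct and follows essentially the same route as the paper's own proof: rewrite the prefactor via Jacobi's triple product identity as $\tfrac12 (q,q;q)_\infty / \bigl\{\theta_{q^2}\bigl(\tfrac{\alpha\beta}{q^{1/2}w}q\bigr)\theta_{q^2}\bigl(\tfrac{\alpha\beta w}{q^{1/2}}q\bigr)\bigr\}$, apply Corollary \ref{corol1} to extract $2\alpha\beta$ together with the signed base-$q$ theta combination, convert the bracketed difference exactly as in \eqref{part22}, and cancel. The sign and $q$-shift bookkeeping you flag is handled identically in the paper, so nothing further is needed.
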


\begin{proof} 
We again rely on Jacobi's triple product identity \eqref{jtpi} to rewrite the right-hand side of \eqref{part2b}. Its first line  is given by means of the theta function with base $q$ as follows:
\begin{align} 
\frac{1}{2}
\frac
{\left( q, q; q^2 \right)_\infty}
{\left( \frac{\alpha q^{1/2}}{\gamma w} q^2, \frac{\alpha \beta w}{q^{1/2}} q^2; q^2 \right)_\infty
\left( \frac{\gamma w}{\alpha q^{1/2}}, \frac{q^{1/2}}{\alpha \beta w}; q^2 \right)_\infty}
&=
\frac{1}{2}
( q, q; q^2 )_\infty
\frac{( q^2; q^2 )_\infty}{\theta_{q^2} \left( \frac{\alpha q^{1/2}}{\gamma w} q \right)}
\frac{( q^2; q^2 )_\infty}{\theta_{q^2} \left( \frac{\alpha \beta w}{q^{1/2}} q \right)} \notag \\ 
&=
\frac{1}{2}
\frac
{( q, q; q )_\infty}
{\theta_{q^2} \left( \frac{\alpha \beta}{q^{1/2} w} q \right)
\theta_{q^2} \left( \frac{\alpha \beta w}{q^{1/2}} q \right)} \notag \\ 
&=
\alpha \beta
\frac
{( q, q; q )_\infty}
{\theta_q \left( w \right)
\theta_q \left( \frac{1}{\beta q^{1/2}} \right) 
-
\theta_q \left( \frac{w}{\alpha \beta^2} \right)
\theta_q \left( \frac{\alpha \beta}{q^{1/2}} \right)}, 
\label{part12b}
\end{align}
where we use $\beta \g = q$ and Corollary \ref{corol1}. Also, the second line of \eqref{part2b} become the linear combination of the products of the theta function thanks to Jacobi's triple product identity \eqref{jtpi},
\begin{align} 
&
\left\{
\left( \frac{\beta \gamma w}{q^{1/2}}, \frac{q^{3/2}}{\beta \gamma w}, \frac{1}{\beta}, \beta q; q \right)_\infty
-
\left( \frac{\gamma w}{\alpha \beta q^{1/2}}, \frac{\alpha \beta q^{3/2}}{\gamma w}, \alpha \beta, \frac{q}{\alpha \beta}; q \right)_\infty
\right\} \notag \\ 
&=
\frac{1}{( q, q; q)_\infty}
\left\{
\theta_q \left( w \right)
\theta_q \left( \frac{1}{\beta q^{1/2}} \right) 
-
\theta_q \left( \frac{w}{\alpha \beta^2} \right)
\theta_q \left( \frac{\alpha \beta}{q^{1/2}} \right)
\right\}, 
\label{part22b}
\end{align}
where we again put the condition $\beta \g = q$. Combining \eqref{part12b} and \eqref{part22b} gives us the relation \eqref{part2b}.
\end{proof}
In addition, we can claim the following identity from Corollary \ref{coro1}:
\begin{cor} \label{coro2} 
For any $w \in \Cbb^*$, we have
\begin{align*} %
\alpha \beta
\left( \frac{\alpha q^{1/2}}{\gamma w} q^2, \frac{\alpha \beta w}{q^{1/2}} q^2; q^2 \right)_\infty
&=
\frac{1}{2}
\frac{\left( q, q; q^2 \right)_\infty}{\left( \frac{\gamma w}{\alpha q^{1/2}} q, \frac{q^{1/2}}{\alpha \beta w} q; q^2 \right)_\infty} \\ 
&\hspace{1em} \times
\left\{
\left( \frac{\beta \gamma w}{q^{1/2}}, \frac{q^{3/2}}{\beta \gamma w}, \frac{1}{\beta}, q \beta; q \right)_\infty
-
\left( \frac{\gamma w}{\alpha \beta q^{1/2}}, \frac{\alpha \beta q^{3/2}}{\gamma w},\alpha \beta, \frac{q}{\alpha \beta}; q \right)_\infty
\right\}. 
\end{align*}
\end{cor}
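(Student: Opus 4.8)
The plan is to obtain Corollary \ref{coro2} as an immediate algebraic consequence of Corollary \ref{coro1}, in exact parallel with the way Corollary \ref{co7} was deduced from Corollary \ref{co6}. Relation \eqref{part2b} asserts that a certain ratio equals the constant $\alpha\beta$; to reach the form claimed here I would simply clear the relevant factor out of the denominator on its right-hand side.

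Concretely, I would multiply both sides of \eqref{part2b} by the $q^2$-shifted factorial $\left( \frac{\alpha q^{1/2}}{\gamma w} q^2, \frac{\alpha \beta w}{q^{1/2}} q^2; q^2 \right)_\infty$, which is precisely the factor appearing in the denominator of the right-hand side of \eqref{part2b}. On the left-hand side, because Corollary \ref{coro1} identifies the whole expression with $\alpha\beta$, the product becomes $\alpha \beta \left( \frac{\alpha q^{1/2}}{\gamma w} q^2, \frac{\alpha \beta w}{q^{1/2}} q^2; q^2 \right)_\infty$, which is exactly the left-hand side of the statement. On the right-hand side, the chosen factor cancels against its copy in the denominator, leaving the prefactor $\frac{1}{2}$, the ratio of $( q, q; q^2 )_\infty$ to the remaining $q^2$-shifted factorial in the denominator, and the bracketed difference of the two products of $q$-shifted factorials untouched. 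The result is precisely the displayed right-hand side of Corollary \ref{coro2}.

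Because Corollary \ref{coro1} is already in hand, no further analytic or combinatorial input is required: the statement is a one-line consequence of clearing a denominator, valid for every $w \in \Cbb^*$ under the standing constraints $\alpha\beta^2 = -1$ and $\beta\gamma = q$ that are already built into \eqref{part2b}. The only step demanding any care---and hence the sole obstacle---is purely notational, namely keeping track of the various powers of $q$ sitting inside the $q^2$-shifted factorials so that the surviving denominator is recorded in the correct form. This is routine bookkeeping rather than a genuine difficulty. The resulting identity is then ready to be substituted into \eqref{proof3}, completing the proof of Theorem \ref{mainth2} exactly as Corollary \ref{co7} was used to finish Theorem \ref{mainth1}.
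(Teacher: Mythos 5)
Your approach is exactly the paper's: Corollary \ref{coro2} is presented there as an immediate consequence of Corollary \ref{coro1}, obtained by clearing the factor $\left( \frac{\alpha q^{1/2}}{\gamma w} q^2, \frac{\alpha \beta w}{q^{1/2}} q^2; q^2 \right)_\infty$ out of the denominator of \eqref{part2b}, and your execution of that step is sound. One caveat on your claim of ``precise'' agreement with the displayed formula: what the multiplication actually leaves in the denominator is $\left( \frac{\gamma w}{\alpha q^{1/2}}, \frac{q^{1/2}}{\alpha \beta w}; q^2 \right)_\infty$, not the $\left( \frac{\gamma w}{\alpha q^{1/2}} q, \frac{q^{1/2}}{\alpha \beta w} q; q^2 \right)_\infty$ that appears in the printed statement; the printed denominator is evidently a typo, since only the version you derive is consistent with \eqref{part2b} and is the identity actually substituted into \eqref{proof3} to produce the right-hand side of \eqref{m2}.
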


Finally, we insert the relation of Corollary \ref{coro2} into \eqref{proof3} which is the left-hand side of \eqref{m2},
\begin{align*}
( \text{the left-hand side of \eqref{m2}} )
&=
\alpha \beta
\frac{\left( \frac{\g}{\alpha^2 \beta}; q^2 \right)_\infty}{\left( \alpha^2 \beta^2; q^2 \right)_\infty}
\frac{1}{\left( \frac{\g w}{\alpha q^{1/2}} q, \frac{q^{1/2}}{\alpha \beta w} q; q^2 \right)_\infty}
\left( \frac{\alpha q^{1/2}}{\gamma w} q^2, \frac{\alpha \beta w}{q^{1/2}} q^2; q^2 \right)_\infty \\ 
&=
( \text{the right-hand side of \eqref{m2}} ). 
\end{align*}
Consequently, we can provide the complete proof of Theorem \ref{mainth2}.

\subsection{Application to physics}
Let us consider the special cases of Theorem \ref{mainth1} and \ref{mainth2}. We obtain the following formulae by setting $\alpha = - a$, $\beta = - a^{- \frac{1}{2}}$ and $\gamma = - a^{\frac{1}{2}} q$: 
\begin{cor} \label{cormain1} 
For any $w \in \mathbb{C}^*$ such that $|q/a| < |q^{1/2} w/a^{1/2}| < 1$, we have
\begin{align*}
&
\frac{1}{2}
\frac{( q; q^2 )_\infty}{( q^2; q^2 )_\infty} 
\left\{
\frac{( a^{- \frac{1}{2}} q; q )_\infty}{( a^{\frac{1}{2}}; q )_\infty}
{}_1\psi_1 \left( a^{\frac{1}{2}}; a^{- \frac{1}{2}} q; q, q^{\frac{1}{2}} a^{- \frac{1}{2}} w \right)
+
\frac{( - a^{- \frac{1}{2}} q; q )_\infty}{( - a^{\frac{1}{2}}; q )_\infty}
{}_1\psi_1 \left( - a^{\frac{1}{2}}; - a^{- \frac{1}{2}} q; q, q^{\frac{1}{2}} a^{- \frac{1}{2}} w \right)
\right\} \\ 
&=
\frac
{( \tilde{a}^{-1} q, \tilde{a}^{\frac{1}{2}} \tilde{w}^{- 1} q^{\frac{1}{2}}, \tilde{a}^{\frac{1}{2}} \tilde{w} q^{\frac{1}{2}}; q^2 )_\infty}
{( \tilde{a}, \tilde{a}^{- \frac{1}{2}} \tilde{w} q^{\frac{1}{2}}, \tilde{a}^{- \frac{1}{2}} \tilde{w}^{- 1} q^{\frac{1}{2}}; q^2 )_\infty}, 
\end{align*}
where $a = \tilde{a}$ and $w = \tilde{w}$. 
\end{cor}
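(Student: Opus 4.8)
The plan is to obtain Corollary \ref{cormain1} as a direct specialization of Theorem \ref{mainth1}, so the whole argument reduces to substituting $\alpha = -a$, $\beta = -a^{-1/2}$, $\gamma = -a^{1/2}q$ into \eqref{m1} and simplifying each $q$-shifted factorial. Before doing anything else I would check that this choice is admissible, i.e.\ that it respects the two standing constraints of the theorem. Indeed $\alpha\beta^2 = (-a)(a^{-1}) = -1$ and $\beta\gamma = (-a^{-1/2})(-a^{1/2}q) = q$, so both hypotheses hold and Theorem \ref{mainth1} applies verbatim.

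Next I would transcribe the left-hand side. The useful intermediate quantities are $\alpha\beta = a^{1/2}$, $\gamma/\alpha = a^{-1/2}q$, $\beta^2\gamma = -a^{-1/2}q$ and $1/\beta = -a^{1/2}$, while the common argument of both bilateral series collapses to $\gamma w/(\alpha q^{1/2}) = q^{1/2}a^{-1/2}w$. Feeding these into \eqref{m1} turns the prefactors $(\gamma/\alpha;q)_\infty/(\alpha\beta;q)_\infty$ and $(\beta^2\gamma;q)_\infty/((1/\beta);q)_\infty$ into $(a^{-1/2}q;q)_\infty/(a^{1/2};q)_\infty$ and $(-a^{-1/2}q;q)_\infty/(-a^{1/2};q)_\infty$, and the two series into ${}_1\psi_1(a^{1/2};a^{-1/2}q;q,q^{1/2}a^{-1/2}w)$ and ${}_1\psi_1(-a^{1/2};-a^{-1/2}q;q,q^{1/2}a^{-1/2}w)$, reproducing the left-hand side of the corollary; the overall factor $\tfrac12(q;q^2)_\infty/(q^2;q^2)_\infty$ is carried through unchanged.

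For the right-hand side I would evaluate the six remaining factors of \eqref{m1} in turn: $\gamma/(\alpha^2\beta) = a^{-1}q$, $\alpha^2\beta^2 = a$, and then $(\alpha q^{1/2}/(\gamma w))q = a^{1/2}q^{1/2}w^{-1}$, $(\alpha\beta w/q^{1/2})q = a^{1/2}q^{1/2}w$, together with $\gamma w/(\alpha q^{1/2}) = a^{-1/2}q^{1/2}w$ and $q^{1/2}/(\alpha\beta w) = a^{-1/2}q^{1/2}w^{-1}$ in the denominator. Assembling these and renaming $a = \tilde a$, $w = \tilde w$ gives exactly the quotient of $q^2$-shifted factorials in the statement, with no further simplification required. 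Finally I would verify the convergence domain: the Ramanujan condition $|b/a|<|z|<1$ underlying Corollary \ref{l1} specializes, for both series, to $|q/a| < |q^{1/2}a^{-1/2}w| < 1$, which is precisely the stated range $|q/a| < |q^{1/2}w/a^{1/2}| < 1$.

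I do not expect a genuine obstacle, since the result is a pure specialization; the only real work is the bookkeeping of the six substitutions and the careful tracking of signs. The two minus signs in $\beta^2\gamma$ and $1/\beta$ must be preserved, whereas they cancel in the symmetric combinations $\alpha\beta$, $\alpha^2\beta^2$, and $\gamma/(\alpha^2\beta)$. Ensuring that every exponent of $q$, $a$ and $w$ lands correctly, and that the base $q$ and the base $q^2$ factors are not conflated, is where an error would most plausibly arise.
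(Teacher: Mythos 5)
Your proposal is correct and matches the paper exactly: the paper obtains Corollary \ref{cormain1} precisely by the substitution $\alpha = -a$, $\beta = -a^{-1/2}$, $\gamma = -a^{1/2}q$ into Theorem \ref{mainth1}, and your verification of the constraints, the factor-by-factor simplification, and the convergence condition $|q/a|<|q^{1/2}w/a^{1/2}|<1$ all check out.
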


\begin{cor} \label{cormain2} 
For any $w \in \mathbb{C}^*$ such that $|q/a| < |q^{3/2} w/a^{1/2}| < 1$, we have
\begin{align*}
&
\frac{1}{2}
\frac{( q; q^2 )_\infty}{( q^2; q^2 )_\infty} 
\left\{
\frac{( a^{- \frac{1}{2}} q; q )_\infty}{( a^{\frac{1}{2}}; q )_\infty}
{}_1\psi_1 \left( a^{\frac{1}{2}}; a^{- \frac{1}{2}} q; q, q^{\frac{1}{2}} a^{- \frac{1}{2}} w \right)
-
\frac{( - a^{- \frac{1}{2}} q; q )_\infty}{( - a^{\frac{1}{2}}; q )_\infty}
{}_1\psi_1 \left( - a^{\frac{1}{2}}; - a^{- \frac{1}{2}} q; q, q^{\frac{1}{2}} a^{- \frac{1}{2}} w \right)
\right\} \\ 
&=
\tilde{a}^{\frac{1}{2}}
\frac
{( \tilde{a}^{-1} q, \tilde{a}^{\frac{1}{2}} \tilde{w}^{- 1} q^{\frac{3}{2}}, \tilde{a}^{\frac{1}{2}} \tilde{w} q^{\frac{3}{2}}; q^2 )_\infty}
{( \tilde{a}, \tilde{a}^{- \frac{1}{2}} \tilde{w} q^{\frac{3}{2}}, \tilde{a}^{- \frac{1}{2}} \tilde{w}^{- 1} q^{\frac{3}{2}}; q^2 )_\infty}, 
\end{align*}
where $a = \tilde{a}$ and $w = \tilde{w}$. 
\end{cor}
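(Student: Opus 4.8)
The plan is to obtain this corollary as a direct specialization of Theorem \ref{mainth2}, so that no new analytic input is needed: the entire content is the bookkeeping of $q$-shifted factorials under the substitution $\alpha = -a$, $\beta = -a^{-1/2}$, $\gamma = -a^{1/2}q$.

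First I would confirm that this choice is admissible, i.e.\ that the two standing constraints of Theorem \ref{mainth2} hold. One checks $\alpha\beta^2 = (-a)(a^{-1}) = -1$ and $\beta\gamma = (-a^{-1/2})(-a^{1/2}q) = q$, so formula \eqref{m2} applies verbatim. Next I would simplify the arguments on the left-hand side of \eqref{m2}: computing $\gamma/\alpha = a^{-1/2}q$, $\alpha\beta = a^{1/2}$, $\beta^2\gamma = -a^{-1/2}q$, $1/\beta = -a^{1/2}$, and the common summation variable $\gamma w/(\alpha q^{1/2}) = q^{1/2}a^{-1/2}w$, one reads off exactly the two series $ {}_1\psi_1(a^{1/2}; a^{-1/2}q; q, q^{1/2}a^{-1/2}w)$ and ${}_1\psi_1(-a^{1/2}; -a^{-1/2}q; q, q^{1/2}a^{-1/2}w)$ displayed in the corollary.

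Then I would reduce the right-hand side in the same mechanical way. The overall prefactor $\alpha\beta$ becomes $a^{1/2}$; the base-$q^2$ data simplify as $\gamma/(\alpha^2\beta) = a^{-1}q$, $\alpha^2\beta^2 = a$, $(\alpha q^{1/2}/\gamma w)\,q^2 = a^{1/2}q^{3/2}w^{-1}$, $(\alpha\beta w/q^{1/2})\,q^2 = a^{1/2}q^{3/2}w$, $\gamma w q^{1/2}/\alpha = a^{-1/2}q^{3/2}w$, and $q^{3/2}/(\alpha\beta w) = a^{-1/2}q^{3/2}w^{-1}$. Renaming $a = \tilde{a}$, $w = \tilde{w}$ then matches the stated right-hand side factor by factor. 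The restriction on $w$ is simply the Ramanujan convergence domain $|b/a| < |z| < 1$ of Corollary \ref{l1} evaluated at these parameters, so it can be imported without extra work.

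The only point demanding care—and the nearest thing to an obstacle—is tracking the half-integer powers of $a$ and $q$ consistently, so that, for instance, the factor $q^2$ multiplying $\alpha q^{1/2}/(\gamma w)$ in \eqref{m2} correctly produces the exponent $q^{3/2}$ in the final expression, while the sign choice $\beta = -a^{-1/2}$ is what generates the minus signs appearing in the second ${}_1\psi_1$. There is no genuine analytic difficulty here: with Theorem \ref{mainth2} in hand, the corollary is a pure substitution and simplification.
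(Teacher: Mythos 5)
Your proposal is correct and is exactly how the paper obtains this corollary: it is stated as an immediate specialization of Theorem \ref{mainth2} under $\alpha=-a$, $\beta=-a^{-1/2}$, $\gamma=-a^{1/2}q$, and all of your substitution checks (the constraints $\alpha\beta^2=-1$, $\beta\gamma=q$, and every $q$-shifted-factorial argument on both sides) agree with the stated formula. No further comment is needed.
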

Those are the identities firstly found from the physics viewpoint to explore the simplest version of Abelian mirror symmetry on $\mathbf{R}P^2 \times S^1$ \cite{MMT}. This symmetry states that two distinct supersymmetric gauge theories named the supersymmetric quantum electrodynamics (SQED) and the XYZ-model are physically equivalent. The left-hand and right-hand sides of Corollary \ref{cormain1} and \ref{cormain2} are the exact formulae of the superconformal indices of the SQED and the XYZ-model, respectively. $a$ and $w$ (resp. $\tilde{a}$ and $\tilde{w}$) are weight parameters called fugacities associated to U$(1)$ symmetries in the SQED (resp. the XYZ-model). The conditions $a = \tilde{a}$ and $w = \tilde{w}$ represents the correspondence of each U$(1)$ symmetry of two theories, which is expected from physical arguments. Therefore, Theorem \ref{mainth1} and \ref{mainth2} are the generalized formulae motivated by the physical results \cite{MMT}, and those are interesting examples that purely physical discussions lead to mathematically nontrivial results.

\section{The $q \to 1 - 0$ limit of the new formulae} \label{Limit}
In this section, we would like to show the $q \to 1 - 0$ limit of our new formulae
\begin{align} 
&
\frac{1}{2}
\left\{
\frac{\left( \frac{\gamma}{\alpha}; q \right)_\infty}{( \alpha \beta; q )_\infty}
{}_1\psi_1 \left( \alpha \beta; \frac{\gamma}{\alpha}; q, \frac{\gamma w}{\alpha q^{1/2}} \right)
+
\frac{( \beta^2 \gamma; q )_\infty}{( \frac{1}{\beta}; q )_\infty}
{}_1\psi_1 \left( \frac{1}{\beta}; \beta^2 \gamma; q, \frac{\gamma w}{\alpha q^{1/2}} \right)
\right\} \notag \\ 
&=
\frac{( q^2; q^2 )_\infty}{( q; q^2 )_\infty}
\frac
{\left( \frac{\gamma}{\alpha^2 \beta}; q^2 \right)_\infty}
{( \alpha^2 \beta^2; q^2 )}
\frac
{\left( \frac{\alpha q^{1/2}}{\gamma w} q, \frac{\alpha \beta w}{q^{1/2}} q; q^2 \right)_\infty}
{\left( \frac{\gamma w}{\alpha q^{1/2}}, \frac{q^{1/2}}{\alpha \beta w}; q^2 \right)_\infty},
\label{col}
\end{align}
and
\begin{align} 
&
\frac{1}{2}
\left\{
\frac{\left( \frac{\gamma}{\alpha}; q \right)_\infty}{( \alpha \beta; q )_\infty}
{}_1\psi_1 \left( \alpha \beta; \frac{\gamma}{\alpha}; q, \frac{\gamma w}{\alpha q^{1/2}} \right)
-
\frac{( \beta^2 \gamma; q )_\infty}{( \frac{1}{\beta}; q )_\infty}
{}_1\psi_1 \left( \frac{1}{\beta}; \beta^2 \gamma; q, \frac{\gamma w}{\alpha q^{1/2}} \right)
\right\} \notag \\ 
&=
\alpha \beta
\frac{( q^2; q^2 )_\infty}{( q; q^2 )_\infty}
\frac
{\left( \frac{\gamma}{\alpha^2 \beta}; q^2 \right)_\infty}
{( \alpha^2 \beta^2; q^2 )}
\frac
{\left( \frac{\alpha q^{1/2}}{\gamma w} q^2, \frac{\alpha \beta w}{q^{1/2}} q^2; q^2 \right)_\infty}
{\left( \frac{\gamma w q^{1/2}}{\alpha}, \frac{q^{3/2}}{\alpha \beta w}; q^2 \right)_\infty}, 
\label{mcor22}
\end{align}
where $\alpha \beta^2 = - 1$ and $\beta \gamma = q$.  At first, we set $\alpha = q^a$, $\beta = q^b$ and $\gamma = q^c$,  then we have $q^{a + b + 1 - c} = - 1$. We also introduce the weight function
\begin{align} 
W ( a, b, c; q )
:=
( 1 - q^2 )^{- \frac{4 a + 3 b - c - 1}{2}},
\label{weight}
\end{align}
where $a$ is constrained by $a = - 2b$. This condition and relation $b + c = 1$ which is equivalent to $\beta \g = q$ imply
\begin{align}
- \frac{4 a + 3 b - c - 1}{2} = 3 b + c.
\label{rellim}
\end{align}
The aim of this section is to show the following theorem as the $q \to 1 - 0$ limit of the formulae \eqref{col} and \eqref{mcor22} multiplied by the weight function $W ( a, b, c; q )$:
\begin{thm} \label{mainlim2} 
For any $w \in \mathbb{C}^*$, we have
\begin{align*}
\frac{2^{2 b + 1}}{\G ( b + 1 )}
{}_1H_1 \left( - b; b + 1; w \right)
=
\frac
{\G \lp \frac{1}{2} \rp}
{\G \lp \frac{2 b + 1}{2} \rp}
( - w )^{- b}
( 1 - w )^{2 b}.
\end{align*}
\end{thm}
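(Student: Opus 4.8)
The plan is to specialize the summation formulae \eqref{col} and \eqref{mcor22}, multiply by the weight $W(a,b,c;q)$ of \eqref{weight}, and pass to the limit $q\to 1-0$ term by term. First I would set $\alpha=q^{a},\ \beta=q^{b},\ \gamma=q^{c}$ with $a=-2b$ and $b+c=1$, keeping in mind that the constraint $\alpha\beta^{2}=-1$ forces $\alpha$ to carry a sign; by \eqref{rellim} the weight exponent collapses to $2b+1$, so $W=(1-q^{2})^{2b+1}=(1-q)^{2b+1}(1+q)^{2b+1}$, and it is precisely this factor that will regularize the individually divergent infinite products on both sides. The guiding principle is that a bilateral basic series tends to the corresponding bilateral ordinary series, $\lim_{q\to1-0}{}_1\psi_1(q^{\mu};q^{\nu};q,z)={}_1H_1(\mu;\nu;z)$, because each ratio $(q^{\mu};q)_n/(q^{\nu};q)_n$ converges to the Pochhammer ratio $(\mu)_n/(\nu)_n$.

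On the left-hand side, of the two series only the one with arguments $\tfrac1\beta=q^{-b}$ and $\beta^{2}\gamma=q^{b+1}$ has clean powers of $q$ and therefore converges to ${}_1H_1(-b;b+1;\cdot)$; the companion series, whose parameters $\alpha\beta$ and $\gamma/\alpha$ absorb the sign of $\alpha$, degenerates (its $q$-factorial ratios tend to $1$) and does not produce an ${}_1H_1$. I would isolate the clean series, e.g.\ by subtracting \eqref{mcor22} from \eqref{col}, and then rewrite its scalar coefficient $W\,(\beta^{2}\gamma;q)_\infty/(\tfrac1\beta;q)_\infty$ through the $q$-gamma function. Using $\Gamma_q$ together with $(1-q^{2})/(1-q)=1+q\to 2$, this coefficient tends to a constant of the shape $2^{2b+1}\,\Gamma(-b)/\Gamma(b+1)$, while the argument $\gamma w/(\alpha q^{1/2})$ tends to $w$ up to the sign carried by $\alpha$, which is exactly what later yields the factor $(-w)^{-b}$.

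For the closed product side I would recast every base-$q^{2}$ product as a Jacobi theta function via \eqref{jtpi}, so that the $w$-dependent quotient becomes a ratio of $\theta_{q^{2}}$'s; applying the theta limit \eqref{limt1} extracts the power $(-w)^{-b}$, while \eqref{limbin} and \eqref{limgamma} (equivalently the $q^{2}$-gamma function) supply the factor $(1-w)^{2b}$ together with the remaining gamma factors, among them $\Gamma(\tfrac12)$ coming from $(q;q^{2})_\infty$. The final step is to collapse the accumulated gamma functions into $\Gamma(\tfrac12)/\Gamma(\tfrac{2b+1}{2})$ using the reflection formula $\Gamma(-b)\Gamma(1+b)=-\pi/\sin(\pi b)$ and Legendre's duplication formula $\Gamma(2b+1)=\tfrac{2^{2b}}{\sqrt\pi}\,\Gamma(b+\tfrac12)\Gamma(b+1)$; this is what turns the $\Gamma(-b)$ left over from the coefficient into the clean ratio of Theorem \ref{mainlim2}. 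I expect the main obstacle to be the consistent bookkeeping of the sign coming from $\alpha\beta^{2}=-1$ across the limit — it is simultaneously responsible for the power of two, for the $(-w)$-branch, and for deciding which of the two ${}_1\psi_1$ survives — compounded by the need to match the vanishing and blow-up of the $q$- and $q^{2}$-based products against the single weight $W$.
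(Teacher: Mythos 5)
Your overall strategy --- multiply by the weight \eqref{weight}, convert the series prefactors to $q$-gamma functions, and handle the infinite-product side with theta functions so that \eqref{limt1} yields $(-w)^{-b}$ while \eqref{limbin} and \eqref{limgamma} yield $(1-w)^{2b}$ and the gamma factors --- is exactly the paper's. The genuine gap is in your treatment of the left-hand side. The paper computes the weighted limit of \emph{each} of the two ${}_1\psi_1$ terms in \eqref{col} separately and finds that both tend to the \emph{same} quantity $2^{2b+1}\,\Gamma(-b)\,{}_1H_1(-b;b+1;w)/\Gamma(b+1)$: under the formal substitution $a=-2b$, $c=1-b$ one has $\alpha\beta=q^{a+b}=q^{-b}$ and $\gamma/\alpha=q^{c-a}=q^{b+1}$, so the first series has the same clean parameters as the second. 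The prefactor $\tfrac12$ then cancels against the two equal contributions, and the leftover $\Gamma(-b)$ cancels against the identical factor produced on the product side --- no reflection or duplication formula is needed (indeed $\Gamma(2b+1)$ never appears). You instead declare the first series degenerate and propose to isolate the second by forming \eqref{col} minus \eqref{mcor22}. That subtraction does isolate the clean series on the left, but it is fatal on the right: writing $R_1,R_2$ for the two closed-form sides, the paper's computations show that $W R_1$ and $W R_2$ converge to the \emph{identical} limit $\Gamma(\tfrac12)\Gamma(-b)(-w)^{-b}(1-w)^{2b}/\Gamma(\tfrac{2b+1}{2})$, so $W(R_1-R_2)\to 0$ and your subtracted identity collapses to ``${}_1H_1=0$'' at leading order. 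Extracting a nontrivial statement from the difference would require the next-order asymptotics of $R_1-R_2$, which you do not have.

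You are right that the constraint $\alpha\beta^2=-1$ is the delicate point, but the paper's resolution is the opposite of yours: it substitutes $a=-2b$ literally into every exponent, so the sign never reaches the parameters or the argument of either ${}_1\psi_1$ (both become ${}_1\psi_1(q^{-b};q^{b+1};q,q^{b+1/2}w)$), and the factor $(-w)^{-b}$ is produced solely by the theta-function ratio $\theta_{q^2}(q^{b+1/2}w)/\theta_{q^2}(q^{-b+1/2}w)$ on the product side, not by a sign carried into the argument of the series as you suggest. If one instead insists on the branch of $\alpha$ with $q^{a+2b}=-1$, then the first series really does degenerate toward a bilateral geometric series on its shrinking annulus of convergence, and then \emph{no} linear combination of \eqref{col} and \eqref{mcor22} has a termwise limit: adding isolates the degenerate series, subtracting kills the right-hand side. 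So your premise leads to a dead end rather than to a cleaner proof; to match the paper you must commit to the formal $a=-2b$ bookkeeping throughout and take the limit of both terms.
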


\begin{proof}
Let us start with considering the left-hand side of \eqref{col} multiplied by the weight function \eqref{weight}.
\begin{enumerate} 
\item[(i)] The first term on the left-hand side of \eqref{col} can be expressed in terms of the $q$-gamma function,
\begin{align} 
&
( 1 - q^2 )^{3 b + c}
\frac{\left( q^{c - a}; q \right)_\infty}{( q^{a + b}; q )_\infty}
{}_1\psi_1 \left( q^{a + b}; q^{c - a}; q, q^{c - a - \frac{1}{2}} w \right) \notag \\ 
&=
(1 + q)^{3 b + c}
\cdot
( 1 - q )^{c - a - 1}
\frac{\left( q^{c - a}; q \right)_\infty}{( q; q )_\infty}
\cdot
( 1 - q )^{1 - ( a + b )}
\frac{( q; q )_\infty}{( q^{a + b}; q )_\infty}
\cdot
{}_1\psi_1 \left( q^{a + b}; q^{c - a}; q, q^{c - a - \frac{1}{2}} w \right) \notag \\ 
&=
(1 + q)^{3 b + c}
\frac{\G_{q} ( a + b )}{\G_{q} ( c - a )}
{}_1\psi_1 \left( q^{a + b}; q^{c - a}; q, q^{c - a - \frac{1}{2}} w \right) \notag \\ 
&=
(1 + q)^{2 b + 1}
\frac{\G_{q} ( - b )}{\G_{q} ( b + 1 )}
{}_1\psi_1 \left( q^{- b}; q^{b + 1}; q, q^{b + \frac{1}{2}} w \right). \notag 
\end{align}
The $q \to 1 - 0$ limit of this part becomes
\begin{align}
2^{2 b + 1}
\frac{\G ( - b )}{\G ( b + 1 )}
{}_1H_1 \left( - b; b + 1; w \right).
\label{limitp2}
\end{align}

\item[(ii)] The second term on the left-hand side of \eqref{col} is rewritten in the same manner as the first term,
\begin{align} 
&
( 1 - q^2 )^{3 b + c}
\frac{( q^{2 b + c}; q )_\infty}{( q^{- b}; q )_\infty}
{}_1\psi_1 \left( q^{- b}; q^{2 b + c}; q, q^{c - a - \frac{1}{2}} w \right) \notag \\ 
&=
(1 + q)^{3 b + c}
\frac{\G_{q} ( - b )}{\G_{q} ( 2 b + c )}
{}_1\psi_1 \left( q^{- b}; q^{2 b + c}; q, q^{c - a - \frac{1}{2}} w \right) \notag \\ 
&=
(1 + q)^{2 b + 1}
\frac{\G_{q} ( - b )}{\G_{q} ( b + 1 )}
{}_1\psi_1 \left( q^{- b}; q^{b + 1}; q, q^{c - a - \frac{1}{2}} w \right), \notag 
\end{align}
namely, the $q \to 1 - 0$ limit of this part also produces the contribution \eqref{limitp2}.
\end{enumerate}
As a result, we can take the $q \to 1 - 0$ limit of the left-hand side of \eqref{col} as
\begin{align} 
( \text{the left-hand side of \eqref{col}} )
&\to
\frac{1}{2}
\cdot
2
\cdot
2^{2 b + 1}
\frac{\G ( - b )}{\G ( b + 1 )}
{}_1H_1 \left( - b; b + 1; w \right) \notag \\ 
&=
2^{2 b + 1}
\frac{\G ( - b )}{\G ( b + 1 )}
{}_1H_1 \left( - b; b + 1; w \right). 
\label{limitp1}
\end{align}

On the other hand, the right-hand side multiplied by the weight function \eqref{weight} can be rewritten by using, in addition to $a = - 2b$ and $b + c = 1$, the $q$-gamma function and the theta function with base $q^2$ as follows:
\begin{align} 
&
W ( a, b, c; q )
\frac
{\left( q^2; q^2 \right)_\infty}
{\lp q; q^2 \rp}
\frac
{\left( q^{c - 2 a - b}; q^2 \right)_\infty}
{\lp q^{2 ( a + b )}; q^2 \rp}
\frac
{\left( q^{a - c + \frac{3}{2}} w^{- 1}, q^{a + b + \frac{1}{2}} w; q^2 \right)_\infty}
{\left( q^{c - a - \frac{1}{2}} w, q^{- a - b + \frac{1}{2}} w^{- 1}; q^2 \right)_\infty} \notag \\ 
&=
( 1 - q^2 )^{\frac{1}{2}}
\frac
{\left( q^2; q^2 \right)_\infty}
{\lp ( q^2 )^{\frac{1}{2}}; q^2 \rp}
\cdot
( 1 - q^2 )^{\frac{1}{2} ( c - 2 a - b ) - 1}
\frac
{\left( ( q^2 )^{\frac{1}{2} ( c - 2 a - b )}; q^2 \right)_\infty}
{\lp q^2; q^2 \rp}
\cdot
( 1 - q^2 )^{1 - ( a + b )}
\frac
{\left( q^2; q^2 \right)_\infty}
{\lp ( q^2 )^{a + b}; q^2 \rp}
\notag \\ 
&\hspace{1em} \times
\lc
\frac
{\left( q^{a - c + \frac{3}{2}} w^{- 1}; q^2 \right)_\infty}
{\left( q^{- a - b + \frac{1}{2}} w^{- 1}; q^2 \right)_\infty}
\cdot
\frac
{\lp q^{- a + c + \frac{1}{2}} w, q^2; q^2 \rp_\infty}
{\lp q^{a + b + \frac{3}{2}} w, q^2; q^2 \rp_\infty}
\rc
\cdot
\frac
{\left( q^{a + b + \frac{1}{2}} w; q^2 \right)_\infty}
{\left( q^{c - a - \frac{1}{2}} w; q^2 \right)_\infty}
\cdot
\frac
{\lp q^{a + b + \frac{3}{2}} w, q^2; q^2 \rp_\infty}
{\lp q^{- a + c + \frac{1}{2}} w, q^2; q^2 \rp_\infty} \notag \\ 
&=
\frac
{\G_{q^2} \lp \frac{1}{2} \rp \G_{q^2} \lp a + b \rp}
{\G_{q^2} \lp \frac{c - 2 a - b}{2} \rp}
\lc
\frac
{\theta_{q^2} \lp q^{- a + c - \frac{1}{2}} w \rp}
{\theta_{q^2} \lp q^{a + b + \frac{1}{2}} w \rp}
\rc \notag \\ 
&\hspace{1em} \times
\frac
{\left( q^{a + b + \frac{1}{2}} w; q^2 \right)_\infty}
{\left( w; q^2 \right)_\infty}
\frac
{\left( w; q^2 \right)_\infty}
{\left( q^{c - a - \frac{1}{2}} w; q^2 \right)_\infty}
\cdot
\frac
{\lp q^{a + b + \frac{3}{2}} w, q^2; q^2 \rp_\infty}
{\lp w; q^2 \rp_\infty}
\frac
{\lp w; q^2 \rp_\infty}
{\lp q^{- a + c + \frac{1}{2}} w, q^2; q^2 \rp_\infty} \notag \\ 
&=
\frac
{\G_{q^2} \lp \frac{1}{2} \rp \G_{q^2} \lp - b \rp}
{\G_{q^2} \lp \frac{2 b + 1}{2} \rp}
\frac
{\theta_{q^2} \lp q^{b + \frac{1}{2}} w \rp}
{\theta_{q^2} \lp q^{- b + \frac{1}{2}} w \rp} \notag \\ 
&\hspace{1em} \times
\frac
{\left( q^{- b + \frac{1}{2}} w; q^2 \right)_\infty}
{\left( w; q^2 \right)_\infty}
\frac
{\left( w; q^2 \right)_\infty}
{\left( q^{b + \frac{1}{2}} w; q^2 \right)_\infty}
\cdot
\frac
{\lp q^{- b + \frac{3}{2}} w; q^2 \rp_\infty}
{\lp w; q^2 \rp_\infty}
\frac
{\lp w; q^2 \rp_\infty}
{\lp q^{b + \frac{3}{2}} w; q^2 \rp_\infty}. 
\label{limitp4}
\end{align}
As $q \to 1 - 0$, the limiting formulae \eqref{limgamma}, \eqref{limt1}, and \eqref{limbin} reduce the function \eqref{limitp4} to 
\begin{align} 
( \text{the right-hand side of \eqref{col}} )
&\to
\frac
{\G \lp \frac{1}{2} \rp \G \lp - b \rp}
{\G \lp \frac{2 b + 1}{2} \rp}
( - w )^{- b}
( 1 - w )^{\frac{1}{2} \lc \lp b - \frac{1}{2} \rp + \lp b + \frac{1}{2} \rp + \lp b - \frac{3}{2} \rp + \lp b + \frac{3}{2} \rp \rc} \notag \\ 
&=
\frac
{\G \lp \frac{1}{2} \rp \G \lp - b \rp}
{\G \lp \frac{2 b + 1}{2} \rp}
( - w )^{- b}
( 1 - w )^{2 b}. 
\label{limitp5}
\end{align}
Combining the results \eqref{limitp1} and \eqref{limitp5}, we obtain Theorem \ref{mainlim2} as a conclusion.
\end{proof}

Further, we can conclude that \eqref{mcor22} in the $q \to 1 - 0$ limit leads to Theorem \ref{mainlim2}.
\begin{proof}
First of all, we rely on $\alpha \beta^2 = - 1$ on the left-hand side of \eqref{mcor22} such that
\begin{align} 
&
\frac{1}{2}
\left\{
\frac{\left( \frac{\gamma}{\alpha}; q \right)_\infty}{( \alpha \beta; q )_\infty}
{}_1\psi_1 \left( \alpha \beta; \frac{\gamma}{\alpha}; q, \frac{\gamma w}{\alpha q^{1/2}} \right)
-
\frac{( \beta^2 \gamma; q )_\infty}{( \frac{1}{\beta}; q )_\infty}
{}_1\psi_1 \left( \frac{1}{\beta}; \beta^2 \gamma; q, \frac{\gamma w}{\alpha q^{1/2}} \right)
\right\} \notag \\ 
&=
\frac{1}{2}
\left\{
\frac{\left( \frac{\gamma}{\alpha}; q \right)_\infty}{( \alpha \beta; q )_\infty}
{}_1\psi_1 \left( \alpha \beta; \frac{\gamma}{\alpha}; q, \frac{\gamma w}{\alpha q^{1/2}} \right)
+
\alpha \beta^2
\frac{( \beta^2 \gamma; q )_\infty}{( \frac{1}{\beta}; q )_\infty}
{}_1\psi_1 \left( \frac{1}{\beta}; \beta^2 \gamma; q, \frac{\gamma w}{\alpha q^{1/2}} \right)
\right\}. 
\label{limitn1}
\end{align}
Because the only difference of \eqref{limitn1} from the left-hand side of \eqref{col} is the presence of factor $\alpha \beta^{2} = q^{a + 2 b}$ in the second term, we can deform \eqref{limitn1} multiplied by the weight function \eqref{weight} in the same way as to obtain \eqref{limitp1} in the $q \to 1 - 0$ limit, thus,
\begin{align} 
( \text{the left-hand side of \eqref{mcor22}} )
\to
2^{2 b + 1}
\frac{\G ( - b )}{\G ( b + 1 )}
{}_1H_1 \left( - b; b + 1; w \right).
\label{limitn2}
\end{align}

Let us turn to the right-hand side of \eqref{mcor22} multiplied by the weight function \eqref{weight},
\begin{align} 
&
W ( a, b, c; q )
q^{a + b}
\frac
{\left( q^2, q^{c - 2 a - b}; q^2 \right)_\infty}
{\lp q, q^{2 ( a + b )}; q^2 \rp}
\frac
{\left( q^{a - c + \frac{5}{2}} w^{- 1}, q^{a + b + \frac{3}{2}} w; q^2 \right)_\infty}
{\left( q^{c - a + \frac{1}{2}} w, q^{- a - b + \frac{3}{2}} w^{- 1}; q^2 \right)_\infty} \notag \\ 
&=
q^{a + b}
\frac
{\G_{q^2} \lp \frac{1}{2} \rp \G_{q^2} \lp a + b \rp}
{\G_{q^2} \lp \frac{c - 2 a - b}{2} \rp}
\frac
{\theta_{q^2} \lp q^{- a + c - \frac{3}{2}} w \rp}
{\theta_{q^2} \lp q^{a + b - \frac{1}{2}} w \rp} \notag \\ 
&\hspace{1em} \times
\frac
{\left( q^{a + b + \frac{3}{2}} w; q^2 \right)_\infty}
{\left( w; q^2 \right)_\infty}
\frac
{\left( w; q^2 \right)_\infty}
{\left( q^{c - a + \frac{1}{2}} w; q^2 \right)_\infty}
\cdot
\frac
{\lp q^{a + b + \frac{1}{2}} w, q^2; q^2 \rp_\infty}
{\lp w; q^2 \rp_\infty}
\frac
{\lp w; q^2 \rp_\infty}
{\lp q^{- a + c - \frac{1}{2}} w, q^2; q^2 \rp_\infty} \notag \\ 
&=
q^{a + b}
\frac
{\G_{q^2} \lp \frac{1}{2} \rp \G_{q^2} \lp - b \rp}
{\G_{q^2} \lp \frac{2 b + 1}{2} \rp}
\frac
{\theta_{q^2} \lp q^{b - \frac{1}{2}} w \rp}
{\theta_{q^2} \lp q^{- b - \frac{1}{2}} w \rp} \notag \\ 
&\hspace{1em} \times
\frac
{\left( q^{- b + \frac{3}{2}} w; q^2 \right)_\infty}
{\left( w; q^2 \right)_\infty}
\frac
{\left( w; q^2 \right)_\infty}
{\left( q^{b + \frac{3}{2}} w; q^2 \right)_\infty}
\cdot
\frac
{\lp q^{- b + \frac{1}{2}} w; q^2 \rp_\infty}
{\lp w; q^2 \rp_\infty}
\frac
{\lp w; q^2 \rp_\infty}
{\lp q^{b + \frac{1}{2}} w; q^2 \rp_\infty}. 
\label{limitn4}
\end{align}
Again, the limiting formulae \eqref{limgamma}, \eqref{limt1}, and \eqref{limbin} provide
\begin{align} 
( \text{the right-hand side of \eqref{mcor22}} )
\to
\frac
{\G \lp \frac{1}{2} \rp \G \lp - b \rp}
{\G \lp \frac{2 b + 1}{2} \rp}
( - w )^{- b}
( 1 - w )^{2 b}.
\label{limitn5}
\end{align}
The expressions \eqref{limitn2} and \eqref{limitn5} are completely identical with \eqref{limitp1} and \eqref{limitp5}, respectively. Therefore, the $q \to 1 - 0$ limit of \eqref{mcor22} also results in Theorem \ref{mainlim2}.
\end{proof}

Finally, we would like to comment on the physical perspective of Theorem \ref{mainlim2}. As we mentioned, the new formula shown in this paper strongly supports Abelian mirror symmetry on $\mathbf{R}P^2 \times S^1$. The $q \to 1 - 0$ limit corresponds physically to the limit where the radius of $S^1$ is taken to be zero, and thus we expect that Theorem \ref{mainlim2} may lead to the certain relationship of partition functions on $\mathbf{R}P^2$. We hope to provide the physical meaning to this purely mathematical conclusion as a future problem.

\section*{Acknowledgments}
The work of H.M. was supported in part by the JSPS Research Fellowship for Young Scientists.

\providecommand{\href}[2]{#2}\begingroup\raggedright\endgroup

\end{document}